\newcommand{\R}{\mathbb{R}}
\newcommand{\N}{\mathbb{N}}
\newcommand{\A}{\mathcal{A}}
\newcommand{\C}{\mathcal{C}}
\newcommand{\eremk}{\hbox{}\hfill\rule{0.8ex}{0.8ex}}
\newtheorem{definition}{Definition}[section]
\newtheorem{theorem}[definition]{Theorem}
\newtheorem{lemma}[definition]{Lemma}
\newtheorem{corollary}[definition]{Corollary}
\newtheorem{remark}[definition]{Remark}
\newtheorem{assumption}[definition]{Assumption}
\numberwithin{equation}{section}
\newcommand{\diam}{\operatorname*{diam}}
\newcommand{\abs}[1]{\left\vert #1 \right\vert}
\newcommand{\skp}[1]{\left< #1 \right>}
\newcommand{\norm}[1]{\left\| #1 \right\|}
\newcommand{\T}{\mathcal{T}}
\newcommand{\dist}{\operatorname*{dist}}
\newcommand{\supp}{\operatorname*{supp}}
\definecolor{dgreen}{rgb}{0,0.65,0}
\definecolor{ddgreen}{rgb}{0,0.4,0.4}
\begin{document}

\hskip 10 pt

\begin{center}
{\fontsize{14}{20}\bf 
Local convergence of the FEM for the integral fractional Laplacian}
\end{center}

\begin{center}
\bigskip
\textbf{Markus Faustmann\footnote{
Institute for Analysis and Scientific Computing, TU Wien, Vienna, Austria,
\tt{markus.faustmann@tuwien.ac.at}}, 
Michael Karkulik\footnote{
Departamento de Matem\'atica, Universidad T\'ecnica Federico Santa Mar\'ia, Valpara\'iso, Chile,
\tt{michael.karkulik@usm.cl}}, 
Jens Markus Melenk\footnote{
Institute for Analysis and Scientific Computing, TU Wien, Vienna, Austria,
\tt{melenk@tuwien.ac.at}}}\\
\bigskip
\end{center}

\begin{abstract}
For first order discretizations of the integral fractional Laplacian we provide sharp local error estimates 
on proper subdomains in both the local $H^1$-norm and the localized energy norm. Our estimates have the form of a local best approximation error plus a global
error measured in a weaker norm.
\end{abstract}

\section{Introduction}
\medskip

It is well-known that the rate of convergence of the finite element method (FEM) applied to elliptic PDEs
depends on the global regularity of the sought solution. However, if the quantity of interest is just the
error on some subdomain, one could hope that - provided the solution is smoother locally - the error
decays faster. This is indeed the case, and the proof of this observation goes back at least to the work \cite{NS74}. Since then, the local
behavior of FEM approximations has been well understood and various refinements of the original arguments can be found 
in, e.g., \cite{wahlbin91,DGS11}. In these works
the locality of the differential operator is used to prove estimates of the form that  the local error is bounded by
a local best approximation and a local error in a weaker norm.

Currently, models of anomalous diffusion are studied in various applications, which gives rise to fractional PDEs, i.e.,
fractional powers of elliptic operators. 
The numerical approximation of fractional PDEs by the finite element method, as studied here, is an active research field, and 
we mention, e.g., \cite{NOS15,AB17,ABH18,BMNOSS19,BC19} for global {\textit a priori} error analyses.  
For other numerical methods applied to fractional PDEs, we refer to \cite{BP15,BLP19} for a semigroup approach, 
\cite{SXK17,AG18} for techniques that exploit eigenfunction expansions,
as well as the survey articles, \cite{BBNOS18,LPGSGZMCMAK18}.

In comparison to integer order elliptic operators such as the
Laplacian, dealing with the fractional version is more challenging due to the non-local nature of
fractional operators. In this regard, fractional operators are similar to the integral operators appearing in the boundary element method (BEM), \cite{SauterSchwab}. For the BEM, local error estimates and improved
convergence results are available as well, see, e.g., \cite{saranen87,Tran95,ST96,FM18}, which differ from the ones for
the FEM in the way that the error contribution in the weaker norm -- sometimes called 'slush term' in the
literature -- is in a global norm instead of a local norm due to the non-local nature of the appearing
operators.

In this article, we provide local error estimates for the FEM applied to the integral fractional Laplacian $(-\Delta)^s$ for $s \in (0,1)$ of the form
\begin{align}\label{eq:mainresult}
\norm{u-u_h}_{H^1(\Omega_0)} \leq C \left(\inf_{w_h\in V_h}\norm{u-w_h}_{H^1(\Omega_1)}+ \norm{u-u_h}_{H^{s-1/2}(\Omega)}\right),
\end{align}
where $\Omega_0 \subset \subset \Omega_1$ are open subsets of the computational domain, $V_h$ is the finite element space, $u$ denotes the exact solution of the fractional differential equation, and $u_h \in V_h$ is its Galerkin approximation.
A direct consequence of this estimate and a duality argument is that the FEM converges locally in
the $H^1$-norm with order $1-\varepsilon$ for any $s \in (0,1)$ and $\varepsilon >0$, provided the solution has $H^2$-regularity locally, and the typical global regularity  $u \in H^{1/2+s-\varepsilon}(\Omega)$. In contrast, global convergence in the $H^1$-norm can only be expected for $s \in (1/2,1)$ and then is limited to the rate $s-1/2-\varepsilon$, see \cite{BC19}.
Our primary focus are meshes that are quasi-uniform in the region of interest $\Omega_1$. This class of meshes particularly includes meshes 
  that are graded towards the boundary. Generalizations to meshes that are locally refined in $\Omega_1$ are possible.
We briefly discuss these issues in Remarks~\ref{rem:gradedH1}, \ref{rem:graded}, and \ref{remk:3.9}.

Recently and independently a local error analysis similar to ours was derived in \cite{BLN20} using different techniques. Our result differs from the estimates \cite{BLN20} in two ways: First, while \cite{BLN20} provides local estimates in the energy norm, we additionally study the stronger local $H^1$-norm. Second, the slush term in \cite{BLN20} is in a different norm, the $L^2$-norm, whereas we obtain the $H^{s-1/2}$-norm. For $s<1/2$ this gives a stronger estimate, but for $s>1/2$ a weaker estimate.
  With our techniques the slush term could also be weakened to even weaker norms (such as the $L^2$-norm for $s>1/2$). 
However, we chose the $H^{s-1/2}$-norm for the slush term since for quasi-uniform meshes the use of weaker norms would not give 
better convergence rates due to the limited regularity of the pertinent dual problem.  

The paper is structured as follows: Section~\ref{sec:mainresults} provides the model problem, the discretization by a
lowest order Galerkin method and the main result, Theorem~\ref{thm:local}, which presents the local error estimate
in the $H^1$-norm, \eqref{eq:mainresult}. If the solution is smoother locally, the improvement in the local convergence rates
are stated in Corollary~\ref{cor:local} for the case of quasi-uniform meshes.

Section~\ref{sec:proofs} provides the proofs of the main results as well as the corresponding result for the energy
norm. Finally, the numerical examples in Section~\ref{sec:numerics} confirm the sharpness of the theoretical local convergence rates of our main result. \\

Concerning notation: For bounded, open sets $\omega \subset \R^d$, integer order Sobolev 
spaces $H^t(\omega)$, $t \in \N_0$, are  defined in the usual way. For $t \in (0,1)$, 
fractional Sobolev spaces are given in terms of the seminorm 
$|\cdot|_{H^t(\omega)}$ and the full norm $\|\cdot\|_{H^t(\omega)}$ by 
\begin{equation}
\label{eq:norm} 
|v|^2_{H^t(\omega)} = \int_{x \in \omega} \int_{y \in \omega} \frac{|v(x) - v(y)|^2}{\abs{x-y}^{d+2t}}
\,dx\,dy, 
\qquad \|v\|^2_{H^t(\omega)} = \|v\|^2_{L^2(\omega)} + |v|^2_{H^t(\omega)}, 
\end{equation}
where we denote the Euclidean norm in $\R^d$ by $\abs{\;\cdot\;}$. 
Moreover, for bounded Lipschitz domains $\Omega \subset \R^d$, we define the spaces
\begin{align*}
 \widetilde{H}^{t}(\Omega) := \{u \in H^t(\R^d) \,: \, u\equiv 0 \; \text{on} \; \R^d \backslash \overline{\Omega} \}
\end{align*}
of $H^t$-functions with zero extension, equipped with the norm
\begin{align*}
 \norm{v}_{\widetilde{H}^{t}(\Omega)}^2 := \norm{v}_{H^t(\Omega)}^2 + \norm{v/\rho^t}_{L^2(\Omega)}^2,
\end{align*}
where $\rho(x)$ is the distance of a point $x \in \Omega$ from the boundary $\partial \Omega$.
An equivalent norm is the full space norm $H^t(\R^d)$ of the zero extension of $u$. Throughout this work,
we will frequently view functions in $\widetilde{H}^t(\Omega)$ as elements of $H^t(\R^d)$ through the zero extension.
For $t \in (0,1)\backslash \{\frac 1 2\}$, the norms $\norm{\cdot}_{\widetilde{H}^{t}(\Omega)}$  and $\norm{\cdot}_{H^{t}(\Omega)}$ are equivalent, \cite{grisvard11}.
Furthermore, for $t > 0$, we denote by $H^{-t}(\Omega)$ the dual space of $\widetilde{H}^t(\Omega)$ and
by $\widetilde{H}^{-t}(\Omega)$ the dual space of $H^t(\Omega)$.
For $t \in \R$, we denote by $H^t_{loc}(\R^d)$ the distributions on $\R^d$, whose restriction to any ball $B_R(0)$ is in $H^t(B_R(0))$.
As usual, we write $\skp{\cdot,\cdot}_{L^2(\Omega)}$ for the duality pairing that extends the $L^2(\Omega)$-inner product.
\bigskip

We note that there are several different ways to define the fractional Laplacian $(-\Delta)^s$. A classical 
definition on the full space ${\mathbb R}^d$ is in terms of the Fourier transformation ${\mathcal F}$, 
i.e., $({\mathcal F} (-\Delta)^s u)(\xi) = |\xi|^{2s} ({\mathcal F} u)(\xi)$. 
A consequence of 
this definition is the mapping property, (see, e.g., \cite{BBNOS18})
\begin{equation}
\label{eq:mapping-property}
(-\Delta)^s: H^t(\R^d) \rightarrow H^{t-2s}(\R^d), \qquad t \ge s, 
\end{equation}
where the Sobolev spaces $H^t(\R^d)$, $t \in \R$, are defined in terms of the Fourier transformation,
\cite[(3.21)]{mclean00}. 
Alternative, equivalent definitions of $(-\Delta)^s$ exist, e.g., 
via spectral, semi-group, or operator theory,~\cite{Kwasnicki} or 
via singular integrals.
Specifically, the integral fractional Laplacian can alternatively be defined 
pointwise for sufficiently smooth functions $u$ as the principal value integral  
\begin{align}\label{eq:fracLaplaceDef}
(-\Delta)^su(x) := C(d,s) \; \text{P.V.} \int_{\R^d}\frac{u(x)-u(y)}{\abs{x-y}^{d+2s}} \, dy \quad \text{with} \quad
C(d,s):= - 2^{2s}\frac{\Gamma(s+d/2)}{\pi^{d/2}\Gamma(-s)},
\end{align}
where $\Gamma(\cdot)$ denotes the Gamma function. 
%
\section{Main Results}\label{sec:mainresults}
\subsection{The model problem}

Let $\Omega \subset \R^d$ be a bounded domain. We consider the fractional differential equation 
\begin{subequations}\label{eq:modelproblem}
\begin{align}
 (-\Delta)^su &= f \qquad \text{in}\, \Omega, \\
 u &= 0 \quad \quad\, \text{in}\, \Omega^c:=\R^d \backslash \overline{\Omega},
\end{align}
\end{subequations}
where $s \in (0,1)$ and $f \in H^{-s}(\Omega)$ is a given right-hand side. 
Equation (\ref{eq:modelproblem}) is understood as in weak form: Find 
$u \in \widetilde{H}^s(\Omega)$ such that 
\begin{equation}
\label{eq:weakform}
a(u,v):= \skp{(-\Delta)^s u,v}_{L^2(\R^d)} = \skp{f,v}_{L^2(\Omega)} 
\qquad \forall v \in \widetilde{H}^s(\Omega). 
\end{equation}
The bilinear form $a$ has the alternative representation (cf., e.g.,   
\cite[Thm.~{1.1 (e),(g)}]{Kwasnicki}) 
\begin{equation}
a(u,v) = 
 \frac{C(d,s)}{2} \!\! \int\int_{\R^d\times\R^d} \!\!\!
 \frac{(u(x)-u(y))(v(x)-v(y))}{\abs{x-y}^{d+2s}} \, dx \, dy 
\quad \forall u,v \in \widetilde{H}^s(\Omega). 
\end{equation}
Existence and uniqueness of $u \in \widetilde{H}^s(\Omega)$ follow from
the Lax--Milgram Lemma for any $f \in H^{-s}(\Omega)$. 
The bilinear form $a$ induces an invertible operator $\A: \widetilde{H}^s(\Omega) \rightarrow H^{-s}(\Omega)$.

Our analysis hinges on the regularity pickup of certain
dual problems. We formulate this as an assumption:

\begin{assumption}\label{ass:shift}
For the domain $\Omega \subset \R^d$ 
and some $0< \varepsilon < s/2$, there holds 
the shift theorem for $\A$: 
\begin{align*}
f \in H^{1/2-s-\varepsilon}(\Omega) \quad \implies \quad u  = \A^{-1} f \in \widetilde{H}^{1/2+s-\varepsilon}(\Omega)
\end{align*}
together with the {\sl a priori} estimate 
\begin{align*}
\norm{u}_{\widetilde{H}^{1/2+s-\varepsilon}(\Omega)} \leq C \norm{f}_{H^{1/2-s-\varepsilon}(\Omega)}. 
\end{align*} 
The constant $C>0$ depends only on $\Omega$, $d$, $s$, and $\varepsilon$.
\end{assumption}

\begin{remark}
\label{remk:shift-theorem}
For Lipschitz domains, Assumption~\ref{ass:shift} holds for any $\varepsilon >0$ by \cite[Thm.~{1.2}, Cor.~{1.3}]{BN21}; the earlier result by 
\cite{Grubb15} showed Assumption~\ref{ass:shift} for any $\varepsilon >0$
and smooth domains.
\eremk
\end{remark}

\subsection{Discretization}
We consider a regular triangulation $\T_h$ (in the sense of Ciarlet, \cite{ciarlet78}) 
of $\Omega$ consisting of open simplices that is also $\gamma$-shape regular in the sense 
\begin{align*}
\max_{T \in \T_h} \big( \diam(T) / |T|^{1/d} \big) \le \gamma < \infty.
\end{align*} 
Here, $\diam(T):=\sup_{x,y\in T}|x-y| =:h_T$ denotes the Euclidean diameter of $T$, whereas $|T|$ is the $d$-dimensional Lebesgue volume. 
The mesh width of $\T_h$ is $h:=\max_{T \in \T_h} h_T$. 

For an element $T \in \T_h$, we define the element patch as
\begin{align*}
 \omega_T := \operatorname*{interior}\Big(\bigcup_{T' \in \T(T)}\overline{T'}\Big) \quad 
 \text{with} \quad  \T(T):=\{T'\in\T_h \,:\, \overline{T'}\cap\overline{T} \neq \emptyset\}.
\end{align*}
Similarly, for a function $\eta \in C_0^\infty(\R^d)$, we write 
\begin{align}
 \omega_\eta := \operatorname*{interior}\Big(\bigcup_{T' \in \T(\eta)}\overline{T'}\Big) \quad 
 \text{with} \quad \T(\eta):=\{T'\in\T_h \,:\, \overline{T'}\cap\operatorname*{supp}\eta \neq \emptyset\},\\
 \omega_\eta^2 := \operatorname*{interior}\Big(\bigcup_{T' \in \T(\eta)^2}\overline{T'}\Big) \quad 
 \text{with} \quad \T(\eta)^2:=\{T'\in\T_h \,:\, \overline{T'}\cap\overline{\omega_\eta} \neq \emptyset\}
\end{align}
for layers of elements of the triangulation surrounding the support of the function $\eta$.

To discretize~\eqref{eq:weakform}, we consider the Galerkin method with piecewise linears.
More precisely, for $T \in \T_h$, we denote the space of all affine functions on $T$ by  $\mathcal{P}^1(T)$.
The spaces of $\T_h$-piecewise affine and globally continuous functions are then defined as
\begin{align*}
S^{1,1}(\T_h) &:= \{u \in H^1(\Omega) \,:\, u|_{T} \in \mathcal{P}^1(T) \text{ for all } T \in \T_h\}
\quad \text{and} \\
S^{1,1}_0(\T_h)& := S^{1,1}(\T_h) \cap \widetilde H^1(\Omega).
\end{align*}
Using $S^{1,1}_0(\T_h) \subset \widetilde{H}^s(\Omega)$ as ansatz  and test space, we seek
a finite element solution $u_h \in S^{1,1}_0(\T_h)$ such that
\begin{align}\label{eq:galerkin}
 a(u_h, v_h) = \skp{f,v_h}_{L^2(\Omega)} 
 \quad \text{for all } v_h \in S^{1,1}_0(\T_h).
\end{align}
The Lax--Milgram Lemma provides unique solvability of \eqref{eq:galerkin}.

\subsection{The main results}
The following theorem is the main result of this article. It estimates the 
local FEM-error in the $H^1$-norm and the energy-norm by the local best approximation and a global error in a weaker norm.

We consider meshes that are quasi-uniform on the domain of interest $\Omega_1 \subset \Omega$. More precisely, for a given triangulation $\T_h$, we introduce 
the triangulation 
\begin{align}\label{eq:triangulationO1}
\T_{h,1} := \left\{ T \in \T_h \,:\, \overline{T}\cap\Omega_1\neq \emptyset \right\},
\end{align}
of a suitable superset of $\Omega_1$.  We assume that $\T_{h,1}$ is quasi-uniform, i.e., there is a  constant $C_0>0$ such that
  $h_{1,\min} \geq C_0 h_{1}$, where   $h_{1,\min} :=\min\left\{ h_T \,:\, T \in \T_{h,1} \right\}$   and $h_{1} :=\max\left\{ h_T \,:\, T\in \T_{h,1} \right\}$ denote
  the minimal and maximal element size on $\Omega_1$.

\begin{theorem}\label{thm:local}
  Let $R>0$ and $\Omega_0 \subset\Omega_1\subset\Omega$ be open sets with $\dist(\Omega_0,\partial\Omega_1)>R$ and $\dist(\Omega_1,\partial\Omega)>R$.
  Let $\T_h$ be a triangulation of $\Omega$ of mesh width $h$, and let $\T_{h,1}$ be defined by \eqref{eq:triangulationO1} with mesh width $h_1$.
   Suppose that
  $\frac{h_{1}}{R} \leq \frac{1}{40}$.
  Let Assumption~\ref{ass:shift} be valid, $u$ solve \eqref{eq:weakform} and $u_h$ be its Galerkin approximation solving \eqref{eq:galerkin}.
  
  Let $\eta_0$, $\eta_2$, $\eta_4  \in C_0^\infty(\R^d)$ be cut-off functions satisfying $\dist(\left\{ \eta_j\equiv 1 \right\},\partial\supp\eta_j)<\frac{R}{40}$ and \linebreak
$\|\nabla^k \eta_j\|_{L^\infty(\R^d)} \leq C_\eta R^{-k}$
  for $j\in\{0,2,4\}$, $k \in \{0,1,2\}$ and some fixed $C_\eta > 0$. Additionally
 suppose that $\eta_0\equiv 1$ on $\Omega_0$ and $\dist(\Omega_0,\partial\supp\eta_0)\leq \frac{R}{40}$,
  as well as
$\dist(\Omega_0,\partial\left\{ \eta_2\equiv 1 \right\})\geq 2R/10$, \linebreak $\dist(\left\{\supp \eta_2, \partial \{\eta_4\equiv 1\} \right\})\geq R/10+3R/40$, and  $\dist(\left\{\supp\eta_4, \partial\Omega_1 \right\})\geq 5R/10+3R/40$.

  Then, there are constants $C_1$, $C_2>0$ depending only on $\Omega$, $\Omega_0$, $\Omega_1$, $R$, $d$, $s$, 
$C_\eta$, and the $\gamma$-shape regularity of $\T_h$ such that
  the following holds:
\begin{enumerate}[(i)]
\item\label{item:thm:local-i}
\begin{align*}
 \norm{\eta_0(u-u_h)}_{\widetilde H^s(\Omega)} \! \leq C_1 \left( \operatorname*{inf}_{v_h \in S^{1,1}_0(\T_h)} \norm{\eta_2(u- v_h)}_{\widetilde H^s(\Omega)} \! + 
 \norm{u-u_h}_{H^{s-1/2}(\Omega)}\right).
\end{align*}

\item \label{item:thm:local-ii}
Assume $u \in H^1(\Omega_1)$. Then, 
\begin{align*}
 \norm{u-u_h}_{H^1(\Omega_0)} &\leq C_2 \Bigl[\, \operatorname*{inf}_{v_h \in S^{1,1}_0(\T_h)} \norm{u- v_h}_{H^1(\Omega_1)}  \\
\nonumber 
& \quad \qquad + 
 \left(\frac{h}{h_1}\right)^{1-s} \left( \|\eta_4( u - u_h)\|_{\widetilde{H}^s(\Omega)} + \norm{u-u_h}_{H^{s-1/2}(\Omega)}\right) \Bigr].
\end{align*}
\end{enumerate}
\end{theorem}

Assuming additional regularity for the solution locally, the following corollary provides 
optimal rates for the local FEM-error. 

\begin{corollary}\label{cor:local}
Let $\T_h$ be a quasi-uniform triangulation with mesh width $h$.
With the assumptions of Theorem~\ref{thm:local}, let $\Omega_1 \subset \Omega_2 \subset \Omega$ with 
$\dist(\Omega_1,\partial \Omega_2) > 0$. Let $\varepsilon>0$ be given by Assumption~\ref{ass:shift}.
\begin{itemize}
\item[(i)]
 Let  $u \in \widetilde H^{s+\alpha}(\Omega)\cap H^{s+\beta}(\Omega_2)$
with $0<\alpha,\beta$. Then,
 \begin{align*}
   \norm{\eta_0(u-u_h)}_{\widetilde H^s(\Omega)} \leq C h^{\min\{1/2+\alpha-\varepsilon,\beta,2-s\}}.
\end{align*}

\item[(ii)]  Let  $u \in \widetilde H^{s+\alpha}(\Omega)\cap H^{1+\beta}(\Omega_2)$
with $0<\alpha,\beta$. Then,
 \begin{align*}
   \norm{u-u_h}_{H^1(\Omega_0)} \leq C h^{\min\{1/2+\alpha-\varepsilon,\beta,1\}}.
\end{align*}
\end{itemize}
Here, the constants $C>0$ depend only on $\Omega,\Omega_0,\Omega_1,R,\dist(\Omega_1,\partial \Omega_2),d,s,\alpha,\beta,$ the $\gamma$-shape regularity of $\T_h$, and $\varepsilon$.
\end{corollary}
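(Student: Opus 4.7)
The plan is to obtain the corollary as a direct consequence of Theorem~\ref{thm:local} by estimating the two terms on the right-hand side separately, using local approximation for the best-approximation term and an Aubin--Nitsche duality argument for the slush term. I would introduce an intermediate set $\Omega_1$ with $\Omega_0 \subset\subset \Omega_1 \subset\subset \Omega_2$ so that the five nested subdomains of Theorem~\ref{thm:local} fit into the positive buffer $\dist(\Omega_1,\partial\Omega_2)>0$ and the cut-off $\widetilde\eta$ from Theorem~\ref{thm:local}(i) can be chosen with $\operatorname{supp}\widetilde\eta \subset \Omega_2$.

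For the best-approximation terms I would take $v_h = I_h u$, where $I_h$ is a Scott--Zhang type quasi-interpolant. Since $\widetilde\eta$ is smooth with support in $\Omega_1 \subset \Omega_2$, a standard multiplier estimate together with the local approximation property of $I_h$ on the patch $\omega_{\widetilde\eta}\subset\Omega_2$ gives
\begin{align*}
\inf_{v_h}\norm{\widetilde\eta(u-v_h)}_{\widetilde H^s(\Omega)} \le C\,\norm{u-I_h u}_{H^s(\Omega_2)} \le C\, h^{\min\{1,\beta\}} \norm{u}_{H^{s+\beta}(\Omega_2)},
\end{align*}
and analogously $\inf_{v_h}\norm{u-v_h}_{H^1(\Omega_1)} \le C h^{\min\{1,\beta\}} \norm{u}_{H^{1+\beta}(\Omega_2)}$ for part (ii). Since the final rate is capped at $1/2+\alpha-\varepsilon < 1$ anyway, writing this simply as $h^\beta$ is acceptable.

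For the slush term $\norm{u-u_h}_{H^{s-1/2}(\Omega)}$ I would run a duality argument. For any $\varphi \in H^{1/2-s-\varepsilon}(\Omega)$ let $w \in \widetilde H^s(\Omega)$ solve the (symmetric) dual problem $a(v,w) = \skp{v,\varphi}_{L^2(\Omega)}$ for all $v \in \widetilde H^s(\Omega)$; Assumption~\ref{ass:shift} provides $w \in \widetilde H^{1/2+s-\varepsilon}(\Omega)$ with the a priori bound $\norm{w}_{\widetilde H^{1/2+s-\varepsilon}(\Omega)}\le C\norm{\varphi}_{H^{1/2-s-\varepsilon}(\Omega)}$. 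Galerkin orthogonality and continuity of $a$ give
\begin{align*}
\skp{u-u_h,\varphi}_{L^2(\Omega)} = a(u-u_h, w-I_h w) \le C \norm{u-u_h}_{\widetilde H^s(\Omega)} \norm{w-I_h w}_{\widetilde H^s(\Omega)},
\end{align*}
and a Scott--Zhang bound yields $\norm{w-I_h w}_{\widetilde H^s(\Omega)}\le C h^{1/2-\varepsilon}\norm{w}_{\widetilde H^{1/2+s-\varepsilon}(\Omega)}$, while Céa's lemma combined with $u \in \widetilde H^{s+\alpha}(\Omega)$ gives the global bound $\norm{u-u_h}_{\widetilde H^s(\Omega)} \le C h^\alpha$. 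Taking the supremum over $\varphi$ bounds $\norm{u-u_h}_{H^{s-1/2+\varepsilon}(\Omega)}$, which is stronger than $\norm{u-u_h}_{H^{s-1/2}(\Omega)}$, by $C h^{\alpha+1/2-\varepsilon}$.

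Combining the two contributions via Theorem~\ref{thm:local} yields the claimed $h^{\min\{1/2+\alpha-\varepsilon,\beta\}}$ rate in both parts (i) and (ii). The main obstacle I expect is the bookkeeping around the cut-off in the best-approximation estimate for part (i), namely controlling $\norm{\widetilde\eta(u-I_h u)}_{\widetilde H^s(\Omega)}$ in terms of a \emph{local} Sobolev norm of $u - I_h u$: one must use that $\widetilde\eta$ is a fixed smooth function with compact support to reduce the double-integral $\widetilde H^s$-norm to an $H^s$-norm on a neighborhood of $\operatorname{supp}\widetilde\eta$ while avoiding contributions from where $I_h u$ is only controlled globally; everything else is essentially standard interpolation estimates and a textbook duality step.
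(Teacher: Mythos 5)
Your proposal is correct and follows essentially the same route as the paper's own proof: combine Theorem~\ref{thm:local} with standard (Scott--Zhang) interpolation bounds for the best-approximation terms, and bound the slush term by $Ch^{1/2+\alpha-\varepsilon}$ via the Aubin--Nitsche duality argument based on Assumption~\ref{ass:shift}, Galerkin orthogonality, and the global C\'ea estimate $\norm{u-u_h}_{\widetilde H^s(\Omega)}\lesssim h^{\alpha}$. The only cosmetic difference is that you phrase the duality step through an explicit dual problem while the paper writes the test functional as $\A v$, which is the same computation.
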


For sufficiently smooth right-hand sides $f$, 
solutions of \eqref{eq:weakform} can be expected to be in 
$H^{s+1/2-\varepsilon}(\Omega)$ for any $\varepsilon >0$ 
(cf.\ the shift theorem of Assumption~\ref{ass:shift}), which 
gives $\alpha = 1/2-\varepsilon$ in Corollary~\ref{cor:local}. However,
typically, solutions are smoother on any subdomain $\Omega_2 \subset \Omega$
(cf.\ Lemma~\ref{lem:intreg})
that satisfies $\dist(\Omega_2, \partial \Omega)>0$. For $u \in H^{2}(\Omega_2)$, i.e., $\beta = 1$ in the second statement of the
Corollary~\ref{cor:local},
this leads to convergence of order $\mathcal{O}(h^{1-2\varepsilon})$ in the $H^1$-norm locally.

\begin{remark}\label{rem:sharpness}
Corollary~\ref{cor:local} gives sharp local convergence results both in the $H^1$-norm and the energy norm on quasi-uniform meshes. For sufficiently high local regularity, both estimates give the same rate of convergence locally. 
This is due to the fact that in this case the slush term in Theorem~\ref{thm:local} dominates and both local error estimates employ the same slush term. 
However, weakening the norm of the slush term does not improve the convergence rates on quasi-uniform meshes, since the duality arguments used (see the proof of Corollary~\ref{cor:local}) to estimate the $H^{s-1/2}(\Omega)$-norm already exploits the maximal regularity of the dual problem available. Improved rates may be expected, however,  if meshes that are graded towards $\partial\Omega$ are employed.

We also mention that local estimates in the $L^2$-norm are possible, but, for the same reason, the rate of convergence locally in $L^2$ for locally smooth solutions is not better than in the energy or $H^1$-norm. 
We refer to the numerical results in Section~\ref{sec:numerics} for the sharpness of these observations.
\eremk
\end{remark}

\begin{remark}\label{rem:gradedH1}
In order to counteract the singular behavior of solutions of fractional PDEs near the boundary,
meshes that are graded towards the boundary can be employed, cf.~\cite{AB17,BLN20}.
Due to our assumption $\dist(\Omega_1,\partial\Omega)>R$ (for fixed $R>0$), our local estimates of Theorem~\ref{thm:local} are also valid for such graded meshes.
\eremk
\end{remark}

\subsection{The fractional Laplacian and the Caffarelli-Silvestre extension}

A key tool in the proof of a similar result for the BEM in \cite{FM18} was the use of properties of 
the (single- or double-layer) potentials or, more precisely, a Caccioppoli type inequality. This interior regularity result allowed us to 
control derivatives of the potentials.

For the fractional Laplacian a similar idea can be employed, where the role of the potential is taken by 
the Caffarelli-Silvestre extension problem \cite{CafSil07}, i.e., the fractional Laplacian  
can be understood as a Dirichlet-to-Neumann operator of a degenerate elliptic PDE on a half space in 
$\R^{d+1}$: Given $v \in \widetilde{H}^s(\Omega)$, let $U = U(x,\mathcal{Y})$ 
solve for $\alpha = 1-2s \in (-1,1)$
\begin{subequations}\label{eq:extension}
\begin{align}
\label{eq:extension-a}
 \operatorname*{div} (\mathcal{Y}^\alpha \nabla U) &= 0  \;\quad\quad\text{in} \; \R^d \times (0,\infty), \\ 
\label{eq:extension-b}
 U(\cdot,0) & = v  \,\quad\quad\text{in} \; \R^d.
\end{align}
\end{subequations}
(The solution $U$ is unique by requiring $U$ to be in the Beppo-Levi space 
${\mathcal B}^1_\alpha(\R^d \times \R^+)$ introduced below.)
Then, the fractional Laplacian can be recovered as the Neumann data of the extension problem in the 
sense of distributions, \cite[Thm.~{3.1}]{CafSil07,cabre-sire14}:
\begin{align}
 -d_s \lim_{\mathcal{Y}\rightarrow 0^+} \mathcal{Y}^\alpha \partial_\mathcal{Y} U(x,\mathcal{Y}) = (-\Delta)^s v, 
\qquad 
d_s = 2^{2s-1}\Gamma(s)/\Gamma(1-s).
\end{align}
The natural Hilbert space for weak solutions of equation \eqref{eq:extension} is a weighted Sobolev space.
For measurable subsets $\omega \subset \R^d \times \R^+$, we 
define the weighted $L^2$-norm 
\begin{align*}
 \norm{U}_{L^2_\alpha(\omega)}^2 := \int_{\omega} \mathcal{Y}^{\alpha}\abs{U(x,\mathcal{Y})}^2 dx \, d\mathcal{Y} 
\end{align*}
and denote by $L^2_\alpha(\omega)$ the space of 
square-integrable functions with respect to the weight $\mathcal{Y}^\alpha$. 
The Caffarelli-Silvestre extension is conveniently described in terms of the Beppo-Levi space 
${\mathcal B}^1_{\alpha}(\R^d \times  \R^+):= \{U \in {\mathcal D}^\prime(\R^{d} \times \R^+)\,:\, 
\nabla U \in L^2_\alpha(\R^d \times \R^+)\}$. 
Elements of ${\mathcal B}^1_\alpha(\R^d \times \R^+)$ are  in fact in $L^2_{\rm loc}(\R^d \times \R^+)$
and
one can give meaning to their trace 
at $\mathcal{Y} = 0$, which is denoted $\operatorname{tr} U$. Recalling $\alpha = 1-2s$, one has in fact
$\operatorname{tr} U \in H^s_{\rm loc}(\R^d)$ (see, e.g., \cite{KarMel18}).

\section{Proof of Theorem~\ref{thm:local}}
\label{sec:proofs}

In order to make the proof of the main result more accessible, we sketch the main ingredients in the following, details 
are given in lemmas below. We also fix the notation for this section in the following listings.
\medskip

Throughout this section, we use the notation $\lesssim$ to abbreviate $\leq$ up to a generic constant $C>0$
that does not depend on critical parameters in our analysis  such as the (local) mesh width.  

\begin{enumerate}[(i)]
\item \label{def:cut-off} \emph{Localization with cut-off functions:}
    The assumptions of Theorem~\ref{thm:local} allow us to choose additional nested open sets $\Omega_{j/10}$ and cut-off functions 
$\eta_j$ for $j=1,\dots,9$ such that 
    \begin{align*}
      & \Omega_{j/10}\subset\Omega_{(j+1)/10} \qquad\text{ and } \qquad\dist(\Omega_{j/10},\partial\Omega_{(j+1)/10})\geq R/10, \\
      & \eta_j\equiv 1\text{ on } \Omega_{j/10} \qquad \text{ and } \qquad \dist(\partial\supp\eta_j,\Omega_{j/10}) < \frac{R}{40} 
    \end{align*}
    as well as 
    $\norm{\eta_j}_{W^{k,\infty}(\R^d)} \lesssim R^{-k}$ for $k\in\left\{ 0,1,2 \right\}$. We note that these assumptions together with $\frac{h_1}{R}<\frac{1}{40}$ from Theorem~\ref{thm:local} imply $\omega_{\eta_j}^2\subset\Omega_{(j+1)/10}$ with $\dist(\omega_{\eta_j}^2,\partial\Omega_{(j+1)/10}) \geq \frac{R}{40}$. Moreover, the cut-off functions $\eta_0,\eta_2,\eta_4$ here can be chose to be equal to those in the definition of Theorem~\ref{thm:local}. 
  
 \item \emph{Mapping properties of commutators:} 
Let the commutator of $\A:\widetilde{H}^s(\Omega) \rightarrow H^{-s}(\Omega)$ and an arbitrary cut-off function $\eta$ be defined as the mapping 
 \begin{align}\label{eq:commutator}
  \varphi \mapsto \C_\eta (\varphi) := [\A,\eta](\varphi) := \A(\eta \varphi) - \eta \A(\varphi).
 \end{align}
 The commutator $\C_\eta$  can be seen as a smoothed, localized version of $\A$ since Lemma~\ref{lem:commutator} 
shows the improved mapping property  
 \begin{align}\label{eq:commutatorMapping}
  \C_\eta: \widetilde{H}^{s}(\Omega) \rightarrow H^{1-s}(\Omega).
  \end{align}
We also use commutators of the full-space versions of the fractional Laplacian $(-\Delta)^s$
with cut-off functions $\eta$ 
defined by 
 \begin{align}\label{eq:commutatorFS}
   \varphi \mapsto \widetilde\C_\eta (\varphi) := [(-\Delta)^s,\eta](\varphi)  :=  (-\Delta)^s (\eta \varphi) - \eta (-\Delta)^s \varphi.
 \end{align}
 
\item \emph{Superapproximation:} For $t \in [0,1]$,
  there is a linear operator $\mathcal{J}_h : H^t(\Omega)\rightarrow S^{1,1}_0(\T_h)$ such that for $v_h \in S^{1,1}(\T_h)$
  and cut-off function $\eta_j$ it holds
  \begin{align*}
    \norm{\eta_j v_h - \mathcal{J}_h(\eta_j v_h)}_{H^t(\Omega)} \lesssim  
    h^{2-t} \norm{v_h}_{H^1(\Omega_{(j+1)/10})}.
  \end{align*}
  Here, we gain one power of $h$, since 
  $v_h$ is a discrete function. Lemma~\ref{lem:superapprox} shows that the Scott-Zhang operator~\cite{ScottZhang} has this property.
    Furthermore, due to its local construction, we note that, due to the point (i)
    \begin{align}\label{eq:SZ:supp}
      \supp \mathcal{J}_h(\eta_j v) \subset \omega_{\eta_j}^2 \subset\Omega_{(j+1)/10}.
    \end{align}

\item \emph{Stability of the Galerkin projection:} We define the Galerkin projection $\Pi: \widetilde H^s(\Omega) \rightarrow S_0^{1,1}(\T_h)$ by
\begin{align}\label{eq:Galerkinproj}
 a(\Pi u,v_h) = a(u,v_h) \quad \forall v_h \in S_0^{1,1}(\T_h).
\end{align}
For $t\in [s,1]$, we have
by Lemma~\ref{lem:superapprox} for all $j \in \{0,\dots,9\}$
 \begin{align*}
  \abs{\eta_j \Pi v}_{H^t(\Omega)} \lesssim \abs{v}_{H^t(\Omega)}. 
 \end{align*}
\end{enumerate}
  \begin{lemma}\label{lem:invest}
    Let the assumptions of Theorem~\ref{thm:local} be valid and the sets $\Omega_{j/10}$ and cut-off functions $\eta_j$ be as described above.
    Then, there holds for $j\in\left\{ 0,\ldots, 9 \right\}$ and $t\in[0,1], r\in(-1/2,t]$
    \begin{align}\label{invest:a}
      \| \eta_j v_h \|_{\widetilde H^t(\Omega)} \lesssim h_1^{r-t}\| v_h \|_{H^r(\Omega_{(j+1)/10})}
      \quad\text{ for all } v_h\in S^{1,1}_0(\mathcal{T}_h).
    \end{align}
  \end{lemma}
  \begin{proof}
    To show~\eqref{invest:a} for $r \in [0,t]$, we apply an elementwise inverse estimate using the quasi-uniformity of $\T_{h,1}$,
    \begin{align}\label{invest:eq1}
      \| \eta_j v_h \|_{H^1(\Omega)}^2 \lesssim \!\sum_{T\colon\! T \subset \omega_{\eta_j}}\!\!\! \| v_h \|_{H^1(T)}^2 
      \lesssim \!\sum_{T\colon \!T \subset \omega_{\eta_j}}\!\!\! h_1^{2r-2}\| v_h \|_{H^r(T)}^2 
\lesssim h_1^{2r-2} \| v_h \|_{H^r(\omega_{\eta_j})}^2.\hspace*{-2mm}
    \end{align}
    In particular, $\| \eta_j v_h \|_{H^1(\Omega)} \lesssim h_1^{r-1} \| v_h \|_{H^r(\Omega_{(j+1)/10})}$.
    Furthermore, the support properties of $\eta_j$ and an interpolation arguments gives
    $\| \eta_j v_h \|_{\widetilde H^r(\Omega)} \lesssim \| v_h \|_{H^r(\Omega_{(j+1)/10})}$. Now,~\eqref{invest:a} follows by interpolation between the
  last two estimates. 
  
  Next, we show~\eqref{invest:a} for $-1/2<r<0$.
  The condition $\dist(\omega_{\eta_j}^2,\partial\Omega_{(j+1)/10}) \geq \frac{R}{40}$ together with $h_1/R \leq 1/40$ allows us to choose two cut-off functions
    $\eta'\in\C^\infty_0(\R^d)$ and $\eta_h\in S^{1,1}_0(\mathcal{T}_h)$
    such that $\supp\eta_h\subset\supp\eta'\subset\Omega_{(j+1)/10}$,
    $\eta_h\equiv 1$ on $\omega_{\eta_j}$, $\eta'\equiv 1$ on $\supp\eta_h$, and
    $\norm{\eta_h}_{W^{k,\infty}(\R^d)} + \norm{\eta'}_{W^{k,\infty}(\R^d)} \lesssim R^{-k}$
    for $k \in \{0,1,2\}$. By $\eta_h \equiv 1$ on $\omega_{\eta_j}$, we have
    $\|\eta_jv_h\|_{L^2(\Omega)}\lesssim \|\eta_h v_h \|_{L^2(\Omega)}$.
    Estimate~\eqref{invest:eq1} also gives $\| \eta_j v_h \|_{\widetilde H^1(\Omega)} \lesssim h_1^{-1} \| \eta_hv_h \|_{L^2(\Omega)}$,
    and an interpolation argument shows
    \begin{align}
\label{eq:lemma:invest-10}
      \| \eta_j v_h \|_{\widetilde H^t(\Omega)} \lesssim h_1^{-t} \| \eta_hv_h \|_{L^2(\Omega)}.
    \end{align}
    Denote by $\Pi^{L^2}_2$ the $L^2(\Omega)$-projection onto the piecewise quadratic functions
    \begin{align*}
      S^{2,1}_0(\T_h) &:= \{u \in H^1(\Omega) \,:\, u|_{T} \in \mathcal{P}^2(T) \text{ for all } T \in \T_h\}\cap \widetilde H^1(\Omega).
    \end{align*}
    The boundedness of $\Pi^{L^2}_2$ in $L^2(\Omega)$, an elementwise inverse estimate on $\T_{h,1}$, and quasi-uniformity of $\T_{h,1}$
    show
    \begin{align*}
      \|\eta'\Pi^{L^2}_2 v\|_{L^2(\Omega)} &\lesssim \| v\|_{L^2(\Omega)},\\
      \|\eta'\Pi^{L^2}_2 v\|_{\widetilde H^{1}(\Omega)}^2 &\lesssim
      \sum_{T\in\T_{h,1}}\| \Pi^{L^2}_2 v \|_{H^1(T)}^2
      \lesssim h^{-2}_1 \|\Pi^{L^2}_2 v\|_{L^2(\Omega)}^2 \lesssim h_1^{-2} \| v\|_{L^2(\Omega)}^2.
    \end{align*}
    An interpolation argument then shows $\|\eta'\Pi^{L^2}_2 v\|_{\widetilde H^{-r}(\Omega)} \lesssim h_1^{r} \| v\|_{L^2(\Omega)}$, and hence
    \begin{align}\label{invest:eq2}
      & \| \eta_h v_h\|_{L^2(\Omega)} 
      = \sup_{v\in L^2(\Omega)} \frac{\langle \eta_h v_h, v\rangle_{L^2(\Omega)}}{\|v\|_{L^2(\Omega)}}  
      = \sup_{v\in L^2(\Omega)} \frac{\langle \eta_h v_h, \eta'\Pi^{L^2}_2 v\rangle_{L^2(\Omega)}}{\|v\|_{L^2(\Omega)}}\\
\nonumber 
      &\qquad \leq \| \eta_h v_h\|_{H^{r}(\Omega_{(j+1)/10})}
      \sup_{v\in L^2(\Omega)} \frac{ \|\eta'\Pi^{L^2}_2 v\|_{\widetilde H^{-r}(\Omega)}} {\|v\|_{ L^2(\Omega)}}
      \lesssim h_1^{r}\| v_h\|_{H^{r}(\Omega_{(j+1)/10})}.
    \end{align}
    Inserting (\ref{invest:eq2}) into (\ref{eq:lemma:invest-10}) gives (\ref{invest:a}).
  \end{proof}
\begin{lemma}\label{lem:superapprox}
  Let the assumptions of Theorem~\ref{thm:local} be valid, and let
  $\mathcal{J}_h:\widetilde H^1(\Omega)\rightarrow S^{1,1}_0(\mathcal{T}_h)$ be the Scott-Zhang projection
  from~\cite{ScottZhang}.
  Then, for $-1/2<r\leq s\leq 1$ and $j\in\left\{ 0,\dots,8 \right\}$, there holds
  \begin{align}\label{eq:SZ}
    \norm{\eta_j v_h - \mathcal{J}_h(\eta_j v_h)}_{\widetilde H^s (\Omega)} \lesssim h_1^{1+r-s} \norm{v_h}_{H^r(\Omega_{(j+1)/10})}
    \quad\text{ for all } v_h\in S^{1,1}_0(\mathcal{T}_h), \\
  \label{eq:Galerkinprojc}
    \norm{\eta_j v_h - \Pi(\eta_j v_h)}_{\widetilde H^s (\Omega)} \lesssim h_1^{1+r-s} \norm{v_h}_{H^r(\Omega_{(j+1)/10})}
    \quad\text{ for all } v_h\in S^{1,1}_0(\mathcal{T}_h).
  \end{align}
  Furthermore, for $j,k\in\left\{ 0,\dots,9\right\}$ there holds
  \begin{align}\label{eq:Galerkinprojb}
    \norm{\eta_k\left( 
    \eta_j v_h - \Pi (\eta_j v_h)\right)}_{H^1(\Omega)} &\leq C h_1\norm{v_h}_{H^1(\Omega_{(j+1)/10})}
    \quad\text{ for all } v_h\in S^{1,1}_0(\mathcal{T}_h), \\
 \label{eq:Galerkinproja}
   \norm{\eta_k \left(v - \Pi v\right)}_{H^{1}(\Omega)} &\leq C \norm{v}_{H^{1}(\Omega)}
  \qquad\qquad \quad\text{ for all } v\in \widetilde H^1(\Omega).
 \end{align}
\end{lemma}
\begin{proof}
  Due to~\eqref{eq:SZ:supp} we have $\eta_j v_h - \mathcal{J}_h(\eta_j v_h) \in \widetilde H^1(\Omega)$
  for $j\in\left\{ 0,\dots,9 \right\}$. \linebreak
  \emph{Step 1: }\emph{Proof of~\eqref{eq:SZ} for $r \geq 0$:}
  For $j\in\left\{ 0,\dots,9 \right\}$ and $t\in\left\{ 0,1 \right\}$, we obtain for $0\leq r\leq 1$
  \begin{align*}
    &\norm{\eta_j v_h - \mathcal{J}_h(\eta_j v_h)}_{H^t(\Omega)}^2 \lesssim h_1^{4-2t} 
    \sum_{T\in  \T(\eta_j)}
    \abs{\eta_j v_h}_{H^2(T)}^2 \nonumber \\ 
    &\qquad\qquad\lesssim h_1^{4-2t}
    \sum_{T \in  \T(\eta_j)}
    \|\eta_j\|_{W^{2,\infty}(\mathbb{R}^d)}^2\norm{v_h}_{L^2(T)}^2 + 
    \|\eta_j\|_{W^{1,\infty}(\mathbb{R}^d)}^2\norm{\nabla v_h}_{L^2(T)}^2 \\
    &\qquad\qquad\lesssim h_1^{4-2t} h_{1}^{2r-2}
    \sum_{T\in  \T(\eta_j)}
    \norm{v_h}_{H^r(T)}^2 
    \lesssim 
    h_1^{2-2(t-r)}\norm{v_h}^2_{H^r(\omega_{\eta_j})},
  \end{align*}
  where we additionally used $\abs{v_h}_{H^2(T)} = 0$, the bound $\norm{\eta_j}_{W^{k,\infty}(\R^d)} \lesssim R^{-k}$,
  an elementwise inverse estimate, and the quasi-uniformity of $\T_{h,1}$. 
  Given that $\omega_{\eta_j}\subset\Omega_{(j+1)/10}$, an interpolation argument shows for $0\leq t,r \leq 1$
  \begin{align}\label{lem:superapprox:eq2}
    \norm{\eta_j v_h - \mathcal{J}_h(\eta_j v_h)}_{\widetilde H^t(\Omega)} \lesssim h_1^{1+r-t} \norm{v_h}_{H^r(\Omega_{(j+1)/10})},
  \end{align}
  which implies \eqref{eq:SZ} for $r\geq 0$.
  
  \emph{Step 2: }\emph{Proof of~\eqref{eq:SZ} for $-1/2<r<0$:}
  We will employ the same discrete cut-off function $\eta_h$ used in the proof of Lemma~\ref{lem:invest}.
  Employing Step~1 with $r=0$, we then get with~\eqref{invest:eq2}
  \begin{align*}
    \norm{\eta_j v_h - \mathcal{J}_h(\eta_j v_h)}_{\widetilde H^s(\Omega)} &\lesssim h_1^{1-s}\norm{v_h}_{L^2(\omega_{\eta_j})}
\leq h_1^{1+r-s}\|v_h\|_{H^{r}(\Omega_{(j+1)/10})}.
  \end{align*}
  \emph{Step 3: }\emph{Proof of~\eqref{eq:Galerkinprojc}:} As observed at the beginning,
  $\eta_j v_h - \mathcal{J}_h(\eta_j v_h) \in \widetilde H^1(\Omega)$.
  In particular,~\eqref{eq:Galerkinprojc} is a consequence of~\eqref{eq:SZ} and C\'ea's Lemma.\\
  \emph{Step 4: }\emph{Proof of~\eqref{eq:Galerkinprojb}:}
  We start with the triangle inequality and $\| \eta_k \|_{W^{t,\infty}(\Omega)}\lesssim R^{-k}$,
  \begin{align*}
    \norm{\eta_k\left(\eta_j v_h- \Pi (\eta_j v_h)\right)}_{H^{1}(\Omega)}
    \leq\norm{\eta_j v_h- \mathcal{J}_h(\eta_j v_h)}_{H^{1}(\Omega)} +
    \norm{\eta_k\left(\mathcal{J}_h(\eta_j v_h)- \Pi (\eta_j v_h)\right)}_{H^{1}(\Omega)}.
  \end{align*}
  The first term on the right-hand side can be bounded as stipulated by~\eqref{lem:superapprox:eq2}. 
  For the second term on the right-hand side, we employ the inverse estimate~\eqref{invest:a},
  the triangle inequality, C\'ea's Lemma, and again~\eqref{lem:superapprox:eq2}
  \begin{align*}
    &\norm{\eta_k\left(\mathcal{J}_h(\eta_j v_h)- \Pi (\eta_j v_h)\right)}_{H^{1}(\Omega)}\\
    &\qquad\lesssim h_1^{s-1} \left(\norm{\mathcal{J}_h(\eta_j  v_h)- \eta_j v_h}_{\widetilde H^{s}(\Omega)}
    +\norm{\eta_j  v_h- \Pi (\eta_j v_h)}_{\widetilde H^{s}(\Omega)}\right)\\
    &\qquad\lesssim h_1^{s-1} \norm{\mathcal{J}_h(\eta_j  v_h)- \eta_j v_h}_{\widetilde H^{s}(\Omega)}
    \lesssim h_1\norm{v_h}_{H^{1}(\Omega_{(j+1)/10})}.
  \end{align*}
  \emph{Step 5: }\emph{Proof of~\eqref{eq:Galerkinproja}:}
  We can repeat the arguments of Step~4, replacing superapproximation (\ref{lem:superapprox:eq2})
with the classical approximation properties of the Scott-Zhang projection.
\end{proof}

The following lemma will be essential in our local analysis. It provides mapping properties of the commutator of the fractional Laplacian and a cut-off function 
as well as mapping properties for the commutator of second order.

\begin{lemma}\label{lem:commutator}
 Let $\eta \in C_0^\infty(\R^d)$ and let $\C_\eta$ be the commutator defined in \eqref{eq:commutator} and 
$\widetilde\C_\eta$ be the commutator defined in (\ref{eq:commutatorFS}). 
\begin{enumerate}[(i)]
\item\label{item:lem:commutator-i} The commutator $\widetilde \C_\eta:\widetilde H^{s}(\Omega) \rightarrow H^{1-s}(\R^d)$ is a bounded linear operator. 
\item\label{item:lem:commutator-ii}
  For the commutator $\mathcal{C}_\eta$, we have
 \begin{align*}
  \C_\eta : \widetilde{H}^s(\Omega) \rightarrow H^{1-s}(\Omega) \quad \text{ and, by symmetry (\ref{eq:symmetry-a}),} \quad  \C_\eta : \widetilde H^{s-1}(\Omega) \rightarrow H^{-s}(\Omega).
 \end{align*}
An interpolation argument therefore gives
\begin{align*}
  \C_\eta : \widetilde{H}^{s-1+\theta}(\Omega)  \rightarrow H^{-s+\theta}(\Omega), \qquad \theta \in [0,1].
\end{align*}
\item\label{item:lem:commutator-iii} The commutator of second order $\widetilde{\C}_{\eta,\eta}$ is defined by 
$\widetilde\C_{\eta,\eta} \varphi := \widetilde\C_\eta(\eta \varphi) - \eta \widetilde\C_\eta(\varphi)$. It is a bounded linear operator 
  \begin{align*}
  \widetilde \C_{\eta,\eta} : \widetilde{H}^{s}(\Omega) \rightarrow H^{2-s}(\R^d). 
\end{align*}

\item\label{item:lem:commutator-iv}  For the commutator of second order $\C_{\eta,\eta}$ defined by 
$\C_{\eta,\eta} \varphi := \C_\eta(\eta \varphi) - \eta \C_\eta(\varphi)$, we have 
  \begin{align*}
 &\C_{\eta,\eta} : \widetilde{H}^{s}(\Omega) \rightarrow H^{2-s}(\Omega) \quad \text{ and, by symmetry (\ref{eq:symmetry-b}) and interpolation, } \\ 
& \C_{\eta,\eta} : \widetilde{H}^{s-1/2}(\Omega) \rightarrow H^{3/2-s}(\Omega).
 \end{align*}
\end{enumerate}
\end{lemma}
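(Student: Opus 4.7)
The plan is to first reduce each assertion to explicit integral kernel representations and then exploit that each commutation with a smooth cut-off reduces the pseudo-differential order by one. A direct computation starting from~\eqref{eq:fracLaplaceDef} would yield
\[
\widetilde{\C}_\eta u(x) = C(d,s)\int_{\R^d}\frac{(\eta(x)-\eta(y))\,u(y)}{|x-y|^{d+2s}}\,dy,
\qquad
\widetilde{\C}_{\eta,\eta}u(x)=-C(d,s)\int_{\R^d}\frac{(\eta(x)-\eta(y))^2\,u(y)}{|x-y|^{d+2s}}\,dy.
\]
Since $\eta$ is Lipschitz, these kernels are of order $|x-y|^{1-d-2s}$ and $|x-y|^{2-d-2s}$ respectively, i.e.\ one and two orders less singular than the kernel of $(-\Delta)^s$ itself.

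For \ref{item:lem:commutator-i} I would then recognise $\widetilde{\C}_\eta$ as a pseudo-differential operator of order $2s-1$: symbolic calculus shows that the commutator of a symbol of order $2s$ with multiplication by a smooth, compactly supported function has symbol of order $2s-1$, and therefore $\widetilde{\C}_\eta\colon H^t(\R^d)\to H^{t+1-2s}(\R^d)$ for every $t\in\R$. Setting $t=s$ and using the zero-extension embedding $\widetilde{H}^s(\Omega)\hookrightarrow H^s(\R^d)$ would give the claim. A more self-contained route is to establish the endpoint bounds $\widetilde{\C}_\eta\colon L^2\to H^{\max(0,1-2s)}$ and $\widetilde{\C}_\eta\colon H^{2s-1}\to L^2$ directly from the kernel estimate (via Schur's test after a derivative falls on $\eta$, noting $\nabla_x\eta\in C_0^\infty$) and then to interpolate.

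For \ref{item:lem:commutator-ii}, since $\A v = ((-\Delta)^s v)|_\Omega$ for $v\in\widetilde{H}^s(\Omega)$ extended by zero, and since multiplication by $\eta\in C_0^\infty(\R^d)$ preserves $\widetilde{H}^s(\Omega)$, one has $\C_\eta\varphi = (\widetilde{\C}_\eta\varphi)|_\Omega$, so restriction of part \ref{item:lem:commutator-i} yields $\C_\eta\colon\widetilde{H}^s(\Omega)\to H^{1-s}(\Omega)$. Swapping $x\leftrightarrow y$ in the double integral shows that $\widetilde{\C}_\eta$ is skew-symmetric in the $L^2$-pairing, $\widetilde{\C}_\eta^*=-\widetilde{\C}_\eta$; dualising the first mapping accordingly produces $\C_\eta\colon\widetilde{H}^{s-1}(\Omega)\to H^{-s}(\Omega)$, and the intermediate range follows by real (or complex) interpolation in the scale parameter.

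Parts \ref{item:lem:commutator-iii}--\ref{item:lem:commutator-iv} follow by the same scheme applied to the second-order kernel. Either via symbolic calculus (the repeated commutator has symbol of order $2s-2$) or via direct analysis of the now symmetric kernel $(\eta(x)-\eta(y))^2/|x-y|^{d+2s}$ (for which Schur-type estimates are straightforward since the kernel is integrable on the diagonal for all $s\in(0,1)$), one obtains $\widetilde{\C}_{\eta,\eta}\colon H^s(\R^d)\to H^{2-s}(\R^d)$, transferring to $\Omega$ by restriction. The $L^2$-symmetry $\widetilde{\C}_{\eta,\eta}^*=\widetilde{\C}_{\eta,\eta}$ gives by duality $\C_{\eta,\eta}\colon\widetilde{H}^{s-2}(\Omega)\to H^{-s}(\Omega)$, and interpolation at parameter $\theta=1/4$ between this and $\widetilde{H}^s(\Omega)\to H^{2-s}(\Omega)$ yields the $\widetilde{H}^{s-1/2}\to H^{3/2-s}$ mapping. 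The main obstacle I anticipate is making part \ref{item:lem:commutator-i} fully rigorous without invoking pseudo-differential machinery: the kernel estimates are borderline precisely near $t=s$, and the case $s=1/2$ requires extra care because the Gagliardo and full Sobolev norms no longer agree on $\widetilde{H}^{1/2}(\Omega)$.
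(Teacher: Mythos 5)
Your high-level reading of the situation is right: the commutator is a pseudo-differential operator one order lower than $(-\Delta)^s$, parts~(ii) and~(iv) follow from restriction, skew-adjointness in the $L^2$-pairing and interpolation, and the second-order commutator drops another order. This matches the architecture of the paper's proof, and your formulas for $\widetilde{\C}_\eta$ and $\widetilde{\C}_{\eta,\eta}$ are correct. You also correctly identify the pressure point: making part~(i) rigorous is the real content of the lemma, and it is precisely what the paper spends Steps~1--6 on.

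However, both of the concrete routes you offer for part~(i) have genuine gaps. The symbolic-calculus argument is not immediately applicable because $|\xi|^{2s}$ fails to be a classical H\"ormander symbol at $\xi=0$ (it is not smooth there for $s\notin\N$); one must first split off the low-frequency part, and you do not indicate how to control the commutator of that piece with $\eta$. More seriously, the ``self-contained'' interpolation route does not reach the target. Interpolating $\widetilde{\C}_\eta\colon L^2\to H^{\max(0,1-2s)}$ with $\widetilde{\C}_\eta\colon H^{2s-1}\to L^2$ only produces $H^t\to H^{t+1-2s}$ for $t$ in the interval between $0$ and $2s-1$, and $t=s$ is never in that interval for $s\in(0,1)$; for $s\ge 1/2$ the interpolated range is even stuck at $L^2$. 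In addition, the proposed Schur-test-after-differentiation step is not valid: $\partial_{x_i}\bigl[(\eta(x)-\eta(y))/|x-y|^{d+2s}\bigr]$ contains the term $\partial_{x_i}\eta(x)/|x-y|^{d+2s}$, which has exactly the (non-Schur-integrable) singularity of the kernel of $(-\Delta)^s$; the derivative falling on $\eta$ does not remove the singularity, it only removes the numerator that was taming it. What the paper does instead is Taylor-expand $\eta(x)-\eta(y)$ about $x$ to order $n$, which peels off convolution operators with explicit kernels $\kappa_\alpha(z)=z^\alpha/|z|^{d+2s}$ whose Fourier multipliers can be computed (including the special $s=1/2$ Riesz-transform and logarithmic cases), while the Taylor remainder gives a genuinely less singular kernel handled by Riesz potential mapping. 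The terms $D^\alpha\eta(x)\,(\kappa_\alpha\star e)(x)$ are then smooth multiples of explicitly controlled operators, so no Schur argument on the raw kernel is needed. Your plan for parts~(ii)--(iv) (restriction, adjointness, interpolation at $\theta=1/4$) is correct and mirrors the paper, but without a valid proof of part~(i) the argument does not close.
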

\begin{proof}
The commutator $\widetilde \C_\eta = [(-\Delta)^s,\eta]$ and the commutator of second order $\widetilde \C_{\eta,\eta}$ have the representations
(for sufficiently smooth $e \in \widetilde H^{s}(\Omega)$) 
\begin{align*}
  \widetilde \C_\eta e (x) &= C(d,s)\text{P.V.} \int_{\R^d} \frac{\eta(x)-\eta(y)}{\abs{x-y}^{d+2s}} e(y) dy, \\
  \widetilde \C_{\eta,\eta} e(x) &= -C(d,s)\int_{\R^d} \frac{(\eta(x)-\eta(y))^2}{\abs{x-y}^{d+2s}} e(y) dy.
\end{align*}
We note that, since $\A$ and $(-\Delta)^s$ are symmetric operators, we have the ``symmetry'' properties 
for $u,v \in \widetilde{H}^s(\Omega)$
\begin{align}
\label{eq:symmetry-a}
\langle \C_\eta u,v\rangle_{L^2(\Omega)} &= - \langle u,\C_\eta v\rangle_{L^2(\Omega)}, 
& \langle \widetilde \C_\eta u,v\rangle_{L^2(\Omega)} & = - \langle u,\widetilde\C_\eta v\rangle_{L^2(\Omega)}, 
\\
\label{eq:symmetry-b}
\langle \C_{\eta,\eta} u,v\rangle_{L^2(\Omega)} &=  \langle u,\C_{\eta,\eta} v\rangle_{L^2(\Omega)}, 
& \langle \widetilde \C_{\eta,\eta} u,v\rangle_{L^2(\Omega)} & =  \langle u,\widetilde\C_{\eta,\eta} v\rangle_{L^2(\Omega)}. 
\end{align}
\emph{Proof of (\ref{item:lem:commutator-i}):} 
Using Taylor expansion, we may write for $n \in \{1,2,3\}$ 
\begin{align*}
  \eta(y)-\eta(x) = \sum_{\alpha \in \N_0^d\colon 1\leq |\alpha| \leq n} \frac{1}{\alpha!} D^\alpha \eta(x) (x-y)^\alpha + R_n(x,y), 
\end{align*}
where the smooth remainder is $\mathcal{O}(|x - y|^{n+1})$. 
Inserting this into the representation of $\widetilde \C_\eta$ shows that we have to analyze convolution operators of the form 
$e \mapsto \kappa_\alpha \star e$, where $\kappa_\alpha(x) = \frac{x^\alpha}{\abs{x}^{d+2s}}$ 
for some $\alpha \in \N_0^d$ with $|\alpha|  \ge 1$. 
Convolution  operators of that form are pseudodifferential operators, \cite{taylor96}, and, in fact, the Fourier transform of 
$\kappa_\alpha$ can be computed explicitly. 
By, e.g., 
\cite[Chap.~{II}, Sec.~3.3]{gelfand64}, we have for the Fourier transform
\begin{align}
\label{eq:landkof-1}
({\mathcal F} \frac{1}{|z|^{d-t}})(\zeta) &= c_{t,d} \abs{\zeta}^{-t}, 
\qquad t \ne  -2m, \quad m \in \N_0,
\\
\label{eq:landkof-2}
({\mathcal F} \ln |z|)(\zeta)  &=  c_{0,d}' |\zeta|^{-d} + c_{0,d}\delta(\zeta) = c_{0,d}' |\zeta|^{-d} + c_{0,d}\mathcal{F}(1)(\zeta),
\end{align}
where $\delta(\cdot)$ is the Dirac delta function. 
Here, $c_{t,d}$, $c_{0,d}$, $c_{0,d}'$ are suitable constants.
The terms $|\zeta|^{-d}$ and $|\zeta|^{-t}$ for $t > 0$ are understood as ``generalized functions'' as described 
in \cite{gelfand64} and expressions involving such terms thus require a regularization as described in \cite[Chap.~{I}, Sec.~{1}]{gelfand64}.
A special role is played by the case $s = 1/2$, for which the Riesz transform arises: 
\begin{align}
\label{eq:riesz-transform}
{\mathcal F} \; \text{P.V.} \frac{z_i}{|z|^{d+1}} & = c' \frac{\zeta_i}{|\zeta|}.
\end{align}

\emph{0.~step:} 
We will ascertain in the following steps of the proof that the Fourier transform $\mathcal{F} \kappa_\alpha$ satisfies for $|\alpha| \leq 3$ 
\begin{align}
\label{eq:symbol-bound-kappa}
| \mathcal{F}\kappa_\alpha(\zeta)|  & \leq C_{\alpha} |\zeta|^{2s - |\alpha|}. 
\end{align}
{}From that we obtain the following mapping property of the convolution operator induced by $\kappa_\alpha$ also denoted by $ \kappa_\alpha: e \mapsto \kappa_\alpha \star e$: For $t \ge 0$ such that 
$t-(2s - |\alpha|) \ge 0$ one has  
\begin{align}
\label{eq:mapping-property-kappa}
\kappa_\alpha: \widetilde{H}^t(\Omega) \rightarrow H^{t-(2s-|\alpha|)}_{loc}(\R^d). 
\end{align}
For $2s -|\alpha| \ge  0$, the assumption (\ref{eq:symbol-bound-kappa}) readily implies the mapping property 
$\kappa_\alpha: H^t(\R^d) \rightarrow H^{t - (2s-|\alpha|)}(\R^d)$ for any $t$. For $2s - |\alpha| < 0$, we observe 
$|\kappa_\alpha(z)| \leq C |z|^{-(d+2s-|\alpha|)}$ so that by the mapping properties of the Riesz potential 
(cf., e.g., \cite[Lemma~{7.12}]{gilbarg-trudinger77a}), we have $\kappa_\alpha:L^2(\Omega) \rightarrow L^2_{loc}(\R^d)$. 
Writing $t - (2s - |\alpha|) = t_1 + \tau$ with $t_1 = \lfloor t - (2s-|\alpha|)\rfloor$ and $\tau = t -(2s - |\alpha|) - t_1 \in [0,1)$,
we have using \cite[Lem.~{3.15}]{mclean00}  for the Slobodecki seminorm $|\cdot|_{H^\tau(\R^d)}$ and any $\widetilde\Omega \subseteq \R^d$
\begin{align*}
\abs{\nabla^{t_1} (\kappa_\alpha \star e)}_{H^\tau(\widetilde{\Omega})} & \leq 
\abs{\nabla^{t_1} (\kappa_\alpha \star e)}_{H^\tau(\R^d)}  \sim 
\norm{|\zeta|^{t_1+\tau} \mathcal{F}(\kappa_\alpha) \mathcal{F}(e)}_{L^2(\R^d)} \\
& \stackrel{(\ref{eq:symbol-bound-kappa})}{\leq} C_\alpha 
\norm{|\zeta|^{t} \mathcal{F}(e)}_{L^2(\R^d)} 
\stackrel{t \ge 0}{\leq } C \norm{e}_{H^{t}(\R^d)}. 
\end{align*} 
Hence, we have arrived at $\|\kappa_\alpha \star e\|_{H^{t-(2s-|\alpha|)}(\widetilde\Omega)} \leq C \|e\|_{\widetilde{H}^{t}(\Omega)}$, 
which is (\ref{eq:mapping-property-kappa}). 

\emph{1.~step:} 
Let  $e\in \widetilde H^s(\Omega)$. Then $\supp e \subset \overline{\Omega}$.
For  $x$ with  large $\abs{x}$, 
the representation of the commutator $\widetilde{\mathcal{C}}_\eta e$ shows that it is a smooth function that decays like $r^{-(d+2s)}$ and its derivative decays like $r^{-(d+1+2s)}$. 
Consequently, in order to show that $\widetilde{\C}_\eta e \in H^{1-s}(\R^d)$, it suffices to assert the 
mapping property $\widetilde \C_\eta:\widetilde H^{s}(\Omega) \rightarrow H^{1-s}_{loc}(\R^d)$.
The same argument also applies to the commutator of second order, where it suffices 
to show   $\widetilde \C_{\eta,\eta} : \widetilde{H}^{s}(\Omega) \rightarrow H^{2-s}_{loc}(\R^d)$.

\emph{2.~step:} Analysis of the remainder $R_n$: 
The remainder induces an operator with kernel $r_n(x,y) = R_n(x,y)/|x - y|^{d+2s}$. Its $x$-derivative  satisfies
\begin{align*}
|\partial_x r_n(x,y)| \leq C |x - y|^{-(d-n+2s)}.
\end{align*}
By the mapping properties of the Riesz potential (cf., e.g., \cite[Lemma~{7.12}]{gilbarg-trudinger77a}), we therefore get the mapping property 
$L^2(\Omega) \rightarrow H^1_{loc}(\R^d)$ provided $2s-n < 0$, i.e., $n \ge  1$ for $s <1/2$ and $n \ge 2$ for $s \ge 1/2$. 

For the second derivative, we similarly have
\begin{align*}
|\partial^2_{x} r_n(x,y)| \leq C |x - y|^{-(d-n+1+2s)},
\end{align*}
and the mapping properties of the Riesz potential imply the mapping property \linebreak
$L^2(\Omega) \rightarrow H^2_{loc}(\R^d)$ provided $1+2s-n < 0$, i.e., $n \ge  2$ for $s <1/2$ and $n \ge 3$ for $s \ge 1/2$. 

\emph{3.~step (estimating $\kappa_\alpha$ for $\abs{\alpha}=1$):} 
For $s \ne 1/2$, we note 
$$
\frac{z_i}{|z|^{d+2s}} = -\frac{1}{d+2s-2} \partial_{z_i} \frac{1}{|z|^{d+2s-2}}.
$$
Using integration by parts in the first order term of the Taylor expansion gives for the  principal value part
upon setting $c_{s,d}:= -(d+2s-2)$
\begin{align*}
c_{s,d} \text{P.V.} \int_{\R^d} \kappa_\alpha(x-y) e(y) dy &= 
\lim_{\varepsilon \rightarrow 0 } \int_{\R^d \backslash B_\varepsilon(x)}\nabla_{y} \frac{1}{|x-y|^{d-(2-2s)}}\cdot e_i e(y) dy \\
&=  \lim_{\varepsilon \rightarrow 0 } \int_{\partial B_\varepsilon(x)}\frac{1}{|x-y|^{d-(2-2s)}} e_i\cdot \nu(y) e(y) ds_y \\
&\qquad\quad -  \int_{\R^d \backslash B_\varepsilon(x)}\frac{1}{|x-y|^{d-(2-2s)}} \partial_{y_i}e(y) dy,
\end{align*}
where $e_i$ is the $i$-th unit vector and $\nu(\cdot)$ denotes the outer normal vector to $B_{\varepsilon}(x)$.
Using Taylor expansion, we write $e(y)= e(x) + \widetilde r_1(x,y)$, where the remainder $\widetilde r_1  = \mathcal{O}(\abs{x-y})$. Then, the boundary integral converges to zero since
\begin{align*}
\abs{\int_{\partial B_\varepsilon(x)}\frac{1}{|x-y|^{d-(2-2s)}} e_i\cdot \nu(y) e(y) ds_y}
&\lesssim\varepsilon^{-d+3-2s} \int_{\partial B_\varepsilon(x)} 1 dy \lesssim  
\varepsilon^{2-2s} \rightarrow 0,
\end{align*}
using that the first term in the expansion vanishes by symmetry.  
We conclude,
\begin{align*}
c_{s,d}\text{P.V.} \int_{\R^d} \kappa_\alpha(x-y) e(y) dy &=  -\int_{\R^d}\frac{1}{|x-y|^{d-(2-2s)}} \partial_{y_i}e(y) dy.
\end{align*}
Therefore, by (\ref{eq:landkof-1}) we get for $|\alpha| = 1$
\begin{align}
\label{eq:fourierorder}
c_{s,d}{\mathcal F}(\kappa_{\alpha}\star e)(\zeta) &= {\mathcal F}\left(\frac{-1}{|z|^{d-(2-2s)}}\star \partial_{z_i} e\right)(\zeta)
 =  -c_{\alpha,d} |\zeta|^{-(2-2s)} \mathbf{i} \zeta_i  {\mathcal F}(e), 
\end{align}
which shows that $|\mathcal{F}\kappa_\alpha (\zeta)| \leq C |\zeta|^{2s-1}$ so that by 
(\ref{eq:mapping-property-kappa}) with $t = s$ and $t  - (2s-1) = 1-s$ we have 
$\kappa_\alpha: \widetilde{H}^s(\Omega) \rightarrow H^{1-s}_{loc}(\R^d)$. 

\emph{4.~step: The case $0 < s <  1/2$:} Selecting $n = 1$, Steps 1 -- 3, show that $\widetilde{\C_\eta} = [(-\Delta)^s,\eta]$ has the mapping property
$\widetilde{H}^s(\Omega) \rightarrow H^{1-s}(\R^d)$.

\emph{5.~step: The case $1/2 < s <  1$:} We use $n = 2$. Again, 
the remainder $R_2$ maps $L^2(\Omega) \rightarrow H^1_{loc}(\R^d)$ by Step~2 and for 
$|\alpha| = 1$, the operator $\kappa_{\alpha}$ is an operator of order $2s-1$ by Step~3. 
The operator $\kappa_{\alpha}$ with $|\alpha| = 2$ is structurally similar to the case $|\alpha| = 1$, since  
we can write
\begin{subequations}\label{eq:derivative2}
\begin{align}
\frac{z_i^2}{|z|^{d+2s}} & = \frac{1}{(d+2s-2)(d+2s-4)}\left(\partial^2_{z_i}\frac{1}{\abs{z}^{d+2s-4}}+\frac{d+2s-4}{\abs{z}^{d+2s-2}}\right),\\
\frac{z_i z_j}{|z|^{d+2s}} & = \frac{1}{(d+2s-2)(d+2s-4)}\left(\partial_{z_i}\partial_{z_j}\frac{1}{\abs{z}^{d+2s-4}}\right).
\end{align}
\end{subequations}
Using again (\ref{eq:landkof-1}) and reasoning as in Step~3 yields 
\begin{equation*}
|\mathcal{F}\kappa_\alpha(\zeta)| \leq C |\zeta|^{-2+2s} 
\end{equation*}
so that by (\ref{eq:mapping-property-kappa}) we have $\kappa_\alpha: \widetilde H^s(\Omega) \rightarrow H^{2-s}_{loc}(\R^d) \subset H^{1-s}_{loc}(\R^d)$. 

\emph{6.~step: The case $s = 1/2$:} 
We use $n = 2$.  For $|\alpha| = 1$, the kernel $\kappa_\alpha$ is the Riesz transform that is, by 
the representation (\ref{eq:riesz-transform}), an operator of order $0 = 2s-1$. For $|\alpha| = 2$, \eqref{eq:derivative2} 
can be used for $d \not\in \{1,3\}$ showing that $\kappa_\alpha$ induces an operator of order $2s-1$.
In the case $d=1$, the kernel $\kappa_\alpha$ with $|\alpha| = 2$ is bounded by 1. For $d=3$
and $|\alpha| = 2$, we use 
\begin{align*}
2\frac{z_i^2}{|z|^{d+1}} = -\partial_{z_i}^2 \ln |z| + \frac{1}{\abs{z}^2}.
\end{align*}
By (\ref{eq:landkof-2}) and  (\ref{eq:landkof-1}) and integration by parts, we have as in \eqref{eq:fourierorder}
\begin{align*}
2{\mathcal F}(\kappa_{\alpha}\star e)(\zeta) &= -{\mathcal F}\left(\ln |z|\star \partial^2_{z_i} e\right)(\zeta) + {\mathcal F}\left(\abs{z}^{-2}\star e\right)(\zeta) \\ 
 &=  c_{\alpha,d}' |\zeta|^{-3} {\mathcal F}(\partial^2_{z_i} e) + c_{\alpha,d} {\mathcal F(1)}{\mathcal F}(\partial^2_{z_i} e) + \tilde c_{\alpha,d} |\zeta|^{-1} {\mathcal F}(e) \\
 &=  \left( - c_{\alpha,d}^\prime \zeta_i^2 |\zeta|^{-3} + \tilde c_{\alpha,d}|\zeta|^{-1} \right) {\mathcal F}(e),
\end{align*}
which implies by (\ref{eq:mapping-property-kappa}) that $\kappa_{|\alpha|}$ induces an operator of order $-1 = 2s-2$. 

Altogether, this gives the boundedness of $\widetilde \C_\eta :\widetilde H^{s}(\Omega) \rightarrow H^{1-s}(\R^d)$ for all $s \in (0,1)$. 

\emph{Proof of (\ref{item:lem:commutator-iii}):} We use $n=3$. Taylor expansion and
the representation of $\widetilde{\mathcal{C}}_{\eta,\eta}$ shows that, due to $(\eta(x)-\eta(y))^2$  in the numerator, the leading order term produces $\kappa_\alpha$ 
with $\abs{\alpha} = 2$ and leads to an operator of order $2s-2$ as in Step~5 in the proof of (\ref{item:lem:commutator-i}). 
The terms with $\abs{\alpha} = 3$ are structurally similar to those for $\abs{\alpha} = 2$.
We have
\begin{align*}
\frac{z_i^3}{|z|^{d+2s}} & =-\frac{1}{(d+2s-2)(d+2s-4)(d+2s-6)}\left(\partial^3_{z_i}\frac{1}{\abs{z}^{d+2s-6}}-\partial_{z_i}\frac{3(d+2s-6)}{\abs{z}^{d+2s-4}}\right),
\end{align*}
and similar expressions hold for the mixed derivatives and the logarithm (for the case $s=1/2$). Therefore,
 we again can use integration by parts and \eqref{eq:landkof-1}, \eqref{eq:landkof-2} to obtain that  $\kappa_{\alpha}$ is an operator of order $2s-3$ for $\abs{\alpha} =3$.

The remainder $R_3$ maps $L^2(\Omega) \rightarrow H^2_{loc}(\R^d)$ by Step~2  in the proof of (\ref{item:lem:commutator-i}) and together this shows that $\widetilde{\mathcal{C}}_{\eta,\eta}$ is an operator of order $2s-2$.

\emph{Proof of (\ref{item:lem:commutator-ii}) and (\ref{item:lem:commutator-iv}):} As the operators $\widetilde \C_\eta$ and $\widetilde{\mathcal{C}}_{\eta,\eta}$ are extensions
of the operators $\C_\eta$ and $\mathcal{C}_{\eta,\eta}$ respectively, the boundedness $\C_\eta : \widetilde H^s(\Omega) \rightarrow H^{1-s}(\Omega)$ follows from (\ref{item:lem:commutator-i}) and the boundedness  $\C_{\eta,\eta} : \widetilde H^s(\Omega) \rightarrow H^{2-s}(\Omega)$ follows from (\ref{item:lem:commutator-iii}). 
The symmetry  
property (\ref{eq:symmetry-a})
of $\C_\eta$ then immediately implies
$  \C_\eta : \widetilde{H}^{s-1}(\Omega) \rightarrow H^{-s}(\Omega)$ as a bounded operator. Finally, both these mapping properties imply 
$\C_\eta :\widetilde{H}^{s-1+\theta}(\Omega)  \rightarrow H^{-s+\theta}(\Omega)$ for $\theta \in [0,1]$ by interpolation. The same argument gives the additional mapping property of the commutator of second order.
\end{proof}

We start with the proof of the first statement in Theorem~\ref{thm:local}, the local error estimate in the energy norm.

\begin{proof}[Proof of Theorem~\ref{thm:local}, (\ref{item:thm:local-i})]
 We write using the Galerkin projection $\Pi$ from \eqref{eq:Galerkinproj}, 
 the symmetry of $\A $, and the definition of $\C_\eta$ from \eqref{eq:commutator}
\begin{align*}
\norm{\eta_0 e}_{\widetilde H^s(\Omega)}^2 \! &\lesssim \! \skp{\A(\eta_0 e),\eta_0 e}_{L^2(\Omega)} = 
\skp{\A(\eta_0 e\!-\!\Pi(\eta_0 e)),\eta_0 e}_{L^2(\Omega)} \! + \!  \skp{\A(\Pi(\eta_0 e)),\eta_0 e}_{L^2(\Omega)} \\
&=\! \skp{\A(\eta_0 e-\Pi(\eta_0 e)),\eta_0 e}_{L^2(\Omega)} +  \skp{\Pi(\eta_0 e), \C_{\eta_0} e}_{L^2(\Omega)} +
\skp{\Pi(\eta_0 e), \eta_0\A e}_{L^2(\Omega)} \\
&=: 
{\large\textcircled{\normalsize \text{I}}} + {\large\textcircled{\normalsize \text{II}}}+ {\large\textcircled{\normalsize \text{III}}}.
\end{align*}
The mapping properties of $\A$,
  the stability of the Galerkin projection, the bound~\eqref{eq:Galerkinprojc} with $r=s-1/2$ from Lemma \ref{lem:superapprox},
  and $\eta_2|_{\Omega_{1/10}}\equiv 1$ lead to 
\begin{align*}
  \abs{{\large\textcircled{\normalsize \text{I}}}} &=  \abs{\skp{\eta_0 e-\Pi(\eta_0 e),\A(\eta_0 e)}_{L^2(\Omega)} }
  \\ &\leq \norm{\A(\eta_0 e)}_{H^{-s}(\Omega)} 
  \left(\norm{\eta_0 u - \Pi(\eta_0 u)}_{\widetilde H^s(\Omega)} +  \norm{\eta_0 u_h - \Pi(\eta_0 u_h)}_{\widetilde{H}^s(\Omega)} \right) \\
  &\lesssim \norm{\eta_0 e}_{\widetilde H^s(\Omega)}\left(\norm{\eta_0 u}_{\widetilde H^s(\Omega)}
  + h_1^{1/2} \norm{u_h}_{H^{s-1/2}(\Omega_{3/10})}\right)\\
  &\lesssim \norm{\eta_0 e}_{\widetilde H^s(\Omega)}\left(\norm{\eta_0 u}_{\widetilde H^s(\Omega)}+h^{1/2}_1\norm{\eta_2 u_h}_{H^{s-1/2}(\Omega)}\right) \\
  &\lesssim \norm{\eta_0 e}_{\widetilde H^s(\Omega)}\left((1+h^{1/2}_1)\norm{\eta_2 u}_{\widetilde H^s(\Omega)}+h^{1/2}_1\norm{e}_{H^{s-1/2}(\Omega)}\right).
\end{align*}
Next, note 
  $\norm{e}_{\widetilde H^{s-1}(\Omega)}\lesssim \norm{e}_{H^{s-1/2}(\Omega)}$.
  For $1/2\leq s$ this is clear since $H^{s-1/2}(\Omega)\subset L^2(\Omega)\subset \widetilde H^{s-1}(\Omega)$.
  For $s<1/2$, we recall that $\widetilde H^{1/2-s}(\Omega)=H^{1/2-s}(\Omega)$ with equivalent norms, and hence
  $H^{s-1/2}(\Omega)=\widetilde H^{s-1/2}(\Omega) \subset \widetilde H^{s-1}(\Omega)$.
  Hence, the mapping properties of the commutator from Lemma~\ref{lem:commutator} and the
  stability of the Galerkin projection imply
\begin{align*}
  \abs{{\large\textcircled{\normalsize \text{II}}}} \! = \! \abs{\skp{\Pi(\eta_0 e), \C_{\eta_0} e}_{L^2(\Omega)}} \!\lesssim \!
  \norm{\Pi(\eta_0 e)}_{\widetilde H^s(\Omega)} \norm{\C_{\eta_0} e}_{H^{-s}(\Omega)}\lesssim  \norm{\eta_0 e}_{\widetilde H^s(\Omega)} \norm{e}_{H^{s-1/2}(\Omega)}.
\end{align*}
It remains to estimate $\large\textcircled{\normalsize \text{III}}$.
With the Galerkin orthogonality,~\eqref{eq:SZ:supp}, as well as $\eta_1\equiv 1$ on $\Omega_{1/10}$, we obtain
\begin{align*}
\abs{\large\textcircled{\normalsize \text{III}}}&=\abs{\skp{\A e,\eta_0 \Pi(\eta_0 e)}_{L^2(\Omega)}}
=\abs{\skp{\eta_1\A e,\eta_0 \Pi(\eta_0 e)-\mathcal{J}_h(\eta_0 \Pi(\eta_0 e))}_{L^2(\Omega)}}.
\end{align*}
Then, the bound~\eqref{eq:SZ} from Lemma \ref{lem:superapprox}, the estimate
$\| \cdot \|_{H^s(\Omega_{1/10})}\lesssim \| \cdot \|_{\widetilde H^s(\Omega)}$, Lemma~\ref{lem:commutator}, the stability of the Galerkin projection $\Pi$, 
the inverse estimate from Lemma~\ref{lem:invest},
the fact $\eta_{2}|_{\Omega_{2/10}}\equiv 1$,
and $\norm{e}_{\widetilde H^{s-1}(\Omega)}\lesssim \norm{e}_{H^{s-1/2}(\Omega)}$ as above lead to
\begin{align*}
\abs{\large\textcircled{\normalsize \text{III}}}
&\lesssim \norm{\eta_1 \A e}_{H^{-s}(\Omega)}\norm{\eta_0 \Pi(\eta_0 e)-\mathcal{J}_h(\eta_0 \Pi(\eta_0 e))}_{\widetilde H^{s}(\Omega)}
\\
&\stackrel{(\ref{eq:SZ})}{ \lesssim } \norm{\A(\eta_1 e) - \mathcal{C}_{\eta_1}e}_{H^{-s}(\Omega)}h_1\norm{\Pi(\eta_0 e)}_{\widetilde H^{s}(\Omega)} \\
&
\stackrel{\text{Lem.~\ref{lem:commutator}}}{
\lesssim} \left(\norm{\eta_1 e}_{\widetilde H^{s}(\Omega)}+\norm{e}_{H^{s-1/2}(\Omega)}\right) h_1 \norm{\eta_0 e}_{\widetilde H^{s}(\Omega)} \\
&\stackrel{(\ref{invest:a})}{\lesssim }
\left(h_1\norm{\eta_1 u}_{\widetilde H^{s}(\Omega)}+h^{1/2}_1\norm{\eta_2 u_h}_{H^{s-1/2}(\Omega)}+h_1\norm{e}_{H^{s-1/2}(\Omega)}\right) 
\norm{\eta_0 e}_{\widetilde H^{s}(\Omega)} \\
&\lesssim 
(h^{1/2}_1+h_1)
\left(\norm{\eta_2 u}_{\widetilde H^{s}(\Omega)}+\norm{e}_{H^{s-1/2}(\Omega)}\right)
\norm{\eta_0 e}_{\widetilde H^{s}(\Omega)}.
\end{align*}
Putting the estimates of the three terms together and using $h_1 \lesssim 1$, we obtain
\begin{align*}
 \norm{\eta_0 e}_{\widetilde H^s(\Omega)}^2 \lesssim 
 \norm{\eta_0 e}_{\widetilde H^s(\Omega)}\left(\norm{\eta_2u}_{\widetilde H^s(\Omega)}+\norm{e}_{H^{s-1/2}(\Omega)}\right).
\end{align*}
Applying this estimate to $u - v_h$ for arbitrary $v_h \in S^{1,1}_0(\T_h)$ instead of $u$ and noting that the corresponding Galerkin error 
is $e = (u-v_h) +(v_h-u_h)$  leads to the desired estimate 
in the energy norm.
\end{proof}

\begin{remark}\label{rem:graded}
  Lemmas~\ref{lem:invest},~\ref{lem:superapprox}, and in turn Theorem~\ref{thm:local}(\ref{item:thm:local-i}), require meshes $\T_h$ that are
  quasi-uniform on $\Omega_1$. It is possible to extend relax this condition. 
  Then the factor $h^{1/2}_1$ in the proof of Theorem~\ref{thm:local} has to be replaced by $h_1 h_{1,\min}^{-1/2}$.  
Therefore, if 
\begin{align*}
  \frac{h_1}{h_{1,\min}^{1/2} }\leq C
\end{align*}
with a constant $C>0$ independent of the local mesh sizes, the previous arguments give the sharp local error estimate
\begin{align}
 \norm{\eta_0(u-u_h)}_{\widetilde H^s(\Omega)} \lesssim  \operatorname*{inf}_{v_h \in S^{1,1}_0(\T)} \norm{\eta_4(u- v_h)}_{\widetilde H^s(\Omega)} + 
 \norm{u-u_h}_{H^{s-1/2}(\Omega)}. 
\end{align}
\eremk
\end{remark}

In the following, we focus on the case of local estimates in 
  the stronger $H^1$-seminorm as stated in Theorem~\ref{thm:local}.
In the proof, we exploit additional interior regularity provided by the following lemma.
\begin{lemma}\label{lem:intreg}
Let $\widehat\Omega \subset\subset \Omega$ be open and $\eta$ be a cut-off function with $\supp \eta \subset \widehat\Omega$.
Assume $f \in H^t(\widehat{\Omega})\cap H^{-s}(\Omega)$ for some $-s\leq t\leq 1-s$ and let $u$ solve \eqref{eq:modelproblem}. Then,
$ \eta u \in H^{2s+t}(\R^d)$ and 
\begin{align*}
\norm{\eta u}_{H^{2s+t}(\R^d)} \lesssim \norm{u}_{\widetilde H^{s}(\Omega)} +   \norm{\eta f}_{H^{t}(\R^d)}.
\end{align*}
\end{lemma}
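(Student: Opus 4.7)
The plan is to reduce the interior regularity question to a full-space mapping property of $(-\Delta)^s$ via the commutator identity
\begin{equation*}
(-\Delta)^s(\eta u) \;=\; \eta (-\Delta)^s u + \widetilde{\C}_\eta u \;=\; \eta f + \widetilde{\C}_\eta u,
\end{equation*}
which holds on all of $\R^d$ because $\eta u$ has compact support in $\Omega$ and $u$ solves \eqref{eq:modelproblem}. The first term on the right, $\eta f$, is an $H^t(\R^d)$-function by assumption (its compactly supported extension by zero to $\R^d$ lies in $H^t(\R^d)$ since $t\le 1-s<1$, so the extension by zero preserves the Sobolev index). The second term, $\widetilde{\C}_\eta u$, lies in $H^{1-s}(\R^d)$ by Lemma~\ref{lem:commutator}(i), with norm controlled by $\norm{u}_{\widetilde H^s(\Omega)}$; since $t\le 1-s$, the embedding $H^{1-s}(\R^d)\hookrightarrow H^t(\R^d)$ gives $\widetilde{\C}_\eta u \in H^t(\R^d)$ with the same bound.

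The remaining step is to invert $(-\Delta)^s$ on the full space: given $v := \eta u \in \widetilde H^s(\Omega)\subset H^s(\R^d)$ compactly supported with $(-\Delta)^s v =: g \in H^t(\R^d)$, one wants $v \in H^{2s+t}(\R^d)$. In Fourier variables this reads $|\xi|^{2s}\hat v(\xi) = \hat g(\xi)$, and the mapping property \eqref{eq:mapping-property} immediately provides the bound at high frequencies since $(1+|\xi|^2)^{2s+t}|\xi|^{-4s}\lesssim (1+|\xi|^2)^t$ for $|\xi|\ge 1/2$. I would make this rigorous by splitting $v = \chi(D)v + (1-\chi(D))v$ with a Fourier cut-off $\chi$ supported in $\{|\xi|\ge 1/2\}$ and equal to $1$ on $\{|\xi|\ge 1\}$: the high-frequency piece is bounded in $H^{2s+t}(\R^d)$ by $\norm{g}_{H^t(\R^d)}$ using the trivial symbol estimate above, while the low-frequency piece is the convolution of $v\in L^2$ with a Schwartz function and hence lies in $H^m(\R^d)$ for every $m$, with norm controlled by $\norm{v}_{L^2(\R^d)}\lesssim \norm{u}_{\widetilde H^s(\Omega)}$.

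Putting the two pieces together yields
\begin{equation*}
\norm{\eta u}_{H^{2s+t}(\R^d)} \;\lesssim\; \norm{g}_{H^t(\R^d)} + \norm{\eta u}_{L^2(\R^d)} \;\lesssim\; \norm{\eta f}_{H^t(\R^d)} + \norm{u}_{\widetilde H^s(\Omega)},
\end{equation*}
which is exactly the stated estimate.

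The main obstacle is the low-frequency singularity of the symbol $|\xi|^{-2s}$ at $\xi=0$: a naive Fourier division would not produce an $H^{2s+t}$-bound. This is the reason why the low-frequency regularization via the cut-off $\chi$, together with the compact support of $\eta u$ (giving an $L^2$-bound and, through convolution with a Schwartz kernel, arbitrary smoothness at low frequencies), is essential. The condition $t\le 1-s$ in the hypothesis is exactly what ensures that the commutator contribution is as regular as required; the condition $t\ge -s$ guarantees $2s+t\ge s$, so that the target regularity is not worse than what we already know from $\eta u \in H^s(\R^d)$.
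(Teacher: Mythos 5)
Your proof is correct, and it takes a genuinely different route from the paper's in one key respect. The paper adds $\eta u$ to both sides to obtain the uniformly elliptic full-space equation $(-\Delta)^s(\eta u)+\eta u=\eta f+\widetilde\C_\eta u+\eta u=:\widetilde f$, whose Fourier symbol $1+|\zeta|^{2s}$ is bounded below; the price of this regularization is that $\widetilde f$ now contains $\eta u$, which is initially known only to lie in $H^s(\R^d)$, so the authors must bootstrap (each pass gains $2s$ orders) until the minimum $\min(t,1-s,s)$ in the exponent stabilizes at $t$. You instead keep the bare equation $(-\Delta)^s(\eta u)=\eta f+\widetilde\C_\eta u$, observe that the right-hand side is already in $H^t(\R^d)$ in one shot (using $t\le 1-s$ to embed $\widetilde\C_\eta u\in H^{1-s}(\R^d)$ into $H^t(\R^d)$, both via Lemma~\ref{lem:commutator}(i), which both proofs rely on), and absorb the singularity of $|\zeta|^{-2s}$ at the origin by the Fourier cut-off split $v=\chi(D)v+(1-\chi(D))v$, bounding the low-frequency piece by $\norm{v}_{L^2}\lesssim\norm{u}_{\widetilde H^s(\Omega)}$. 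This avoids the bootstrap entirely and is a clean alternative. One small imprecision: the forward mapping property \eqref{eq:mapping-property} states $(-\Delta)^s\colon H^t(\R^d)\to H^{t-2s}(\R^d)$ and does not by itself yield the inverse bound you need; the actual workhorse is the pointwise symbol estimate $(1+|\xi|^2)^{2s+t}|\xi|^{-4s}\lesssim(1+|\xi|^2)^t$ on $\{|\xi|\ge 1/2\}$, which you do write out explicitly, so the argument is complete.
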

\begin{proof}
By definition of the commutator $\widetilde{\mathcal{C}}_\eta$, the product $\eta u$ solves the equation
\begin{align}
\label{eq:vollraumgleichung}
(-\Delta)^s(\eta u) + \eta u = \eta (-\Delta)^s u + \widetilde{\mathcal{C}}_{\eta} u + \eta u = \eta f + \widetilde{\mathcal{C}}_{\eta} u + \eta u =: \widetilde{f}.
\end{align}
Since $\eta f \in H^t(\R^d)$ and $\widetilde{\mathcal{C}}_{\eta} u \in H^{1-s}(\R^d)$ by Lemma~\ref{lem:commutator} 
and $\eta u \in H^s(\R^d)$,
we have $\widetilde{f} \in H^{\min\{t,1-s,s\}}(\R^d)$. Applying the Fourier transformation to (\ref{eq:vollraumgleichung}) as in the proof of Lemma~\ref{lem:commutator}
noting that all objects live in the full-space $\R^d$, gives 
\begin{align*}
 (1+\abs{\zeta}^{2s}) \mathcal{F}(\eta u) = \mathcal{F}(\widetilde{f}),
\end{align*}
and the Fourier definition of Sobolev norms implies
$\eta u \in H^{2s+\min\{t,1-s,s\}}(\R^d)$. Bootstrapping this argument until the minimum in the exponent is given by 
$t$ then shows the claimed local regularity. 
The norm estimate follows directly from the above equation and the Fourier definition of Sobolev norms and 
the mapping properties of the commutator $\widetilde{\mathcal{C}}_\eta$ from Lemma~\ref{lem:commutator}.
\end{proof}

We will repeatedly employ the $L^2$-orthogonal projection $\Pi^{L^2}:L^2(\Omega) \rightarrow S^{1,1}_0(\T_h)$ defined by
\begin{align}\label{def:L2projection}
\skp{\phi-\Pi^{L^2}\phi,\xi_h}_{L^2(\Omega)} = 0 \qquad \forall \xi_h \in S^{1,1}_0(\T_h).
\end{align}
There hold the following global stability and approximation estimates.
\begin{lemma}\label{lem:L2}
  Let $s\in[0,1]$ and $\T_h$ be a shape regular mesh of size $h$.
Then, the following approximation estimates in negative norms hold: 
  \begin{align*}
    \norm{\phi-\Pi^{L^2}\phi}_{\widetilde H^{s-1}(\Omega)} &\leq C h \norm{\phi}_{\widetilde H^s(\Omega)} \quad\text{ for } s\in (1/2,1],\\
    \norm{\phi-\Pi^{L^2}\phi}_{\widetilde H^{s-1}(\Omega)} &\leq C h^{3/2-2\varepsilon} \norm{\phi}_{\widetilde H^{1/2+s-\varepsilon}(\Omega)} \quad\text{ for } s\in[0,1/2],\, \varepsilon \in (0,1/2],
  \end{align*}
where the constant $C>0$ depends only on $\Omega$, $d$, $s$, $\varepsilon,$ and the $\gamma$-shape regularity of $\T_h$.
\end{lemma}
\begin{proof}
  The approximation estimate for $s\in(1/2,1]$ follows by a simple duality argument and the fact
  that $H^{1-s}(\Omega) = \widetilde H^{1-s}(\Omega)$ with equivalent norms. 
  In the case $s\in[0,1/2]$, the definition of the $\widetilde H^{s-1}(\Omega)$-norm, the orthogonality 
  and standard first order approximation results of the 
  $L^2$-orthogonal projection directly give the stated estimate.
\end{proof}
The $L^2$-orthogonal projection has the advantage that it localizes very well, which is observed, e.g., in 
\cite{wahlbin91}. The following lemma summarizes the local stability and approximation properties of the $L^2$-projection
used in the proof of our main result.

\begin{lemma}\label{lem:localL2}
  Let the assumptions of Theorem~\ref{thm:local} be valid. 
  Let $\Pi^{L^2}$ be the $L^2$-projection defined in \eqref{def:L2projection} and $D_0\subset D_1 \subset \Omega_1$
  be nested open sets with $r:=\dist(D_0,\partial D_1)\geq 4h_1$.
  Then, for $\phi \in H^1(D_1)\cap L^2(\Omega)$, we have local stability
 \begin{align}\label{eq:L2projectionH1}
   \norm{\Pi^{L^2}\phi}_{H^1(D_0)} \leq C\left(\norm{\phi}_{H^1(D_1)} + e^{-cr/h_1}\norm{\phi}_{L^2(\Omega)}\right)
 \end{align}
 and approximation properties
  \begin{align}\label{eq:L2projectionApprox}
    \norm{\phi-\Pi^{L^2}\phi}_{H^t(D_0)} \leq C\left( h^{1-t}_{1}\norm{\phi}_{H^1(D_1)} + e^{-cr/h_1}\norm{\phi}_{L^2(\Omega)}\right)
 \end{align}
for $t \in [0,1]$, where the constants $c$, $C>0$ depend only on $\Omega$, $d$, $D_0$, $D_1$, $c_0$, $t$, and the $\gamma$-shape regularity of $\T_h$.
\end{lemma}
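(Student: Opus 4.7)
The plan is to split $\phi = \chi\phi + (1-\chi)\phi$ via a fixed smooth cut-off function $\chi$ satisfying $\chi \equiv 1$ on an intermediate open set $D_{1/2}$ with $\dist(D_0,\partial D_{1/2}) \geq c_0 h/3$, $\supp \chi \subset D_1$ with $\dist(\supp \chi, \partial D_1) \geq c_0 h/3$, and derivatives of $\chi$ bounded by constants depending only on $D_0,D_1$ (this is possible because $\dist(D_0,\partial D_1)$ is a fixed positive constant, and the hypothesis $\dist(D_0,\partial D_1) \geq c_0 h$ only forces $h$ to be small enough). Linearity of $\Pi^{L^2}$ gives
\begin{align*}
 \Pi^{L^2}\phi = \Pi^{L^2}(\chi\phi) + \Pi^{L^2}((1-\chi)\phi),
\end{align*}
and the two pieces will carry, respectively, the main local behaviour and an exponentially small remote contribution.

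For the inner piece, $\chi\phi$ is compactly supported in $D_1$ with $\norm{\chi\phi}_{H^1(\Omega)} \lesssim \norm{\phi}_{H^1(D_1)}$. The global $H^1$-stability of $\Pi^{L^2}$ (Lemma~\ref{lem:L2} with $s=1$) gives $\norm{\Pi^{L^2}(\chi\phi)}_{H^1(\Omega)} \lesssim \norm{\phi}_{H^1(D_1)}$, yielding the main term of \eqref{eq:L2projectionH1}. Combining the standard $L^2$-approximation bound $\norm{w-\Pi^{L^2}w}_{L^2(\Omega)} \lesssim h\norm{w}_{H^1(\Omega)}$ with $H^1$-stability and interpolating between $t=0$ and $t=1$ yields $\norm{\chi\phi - \Pi^{L^2}(\chi\phi)}_{H^t(\Omega)} \lesssim h^{1-t}\norm{\phi}_{H^1(D_1)}$; since $\chi \equiv 1$ on $D_0$ the left-hand side coincides with $\norm{\phi - \Pi^{L^2}(\chi\phi)}_{H^t(D_0)}$, producing the main term of \eqref{eq:L2projectionApprox}.

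The crux of the proof, and its main obstacle, is the auxiliary exponential-decay estimate
\begin{align*}
 \norm{\Pi^{L^2}\psi}_{L^2(D_0)} \leq C\,e^{-c\rho/h}\norm{\psi}_{L^2(\Omega)} \quad\text{whenever}\quad \dist(D_0,\supp\psi) \geq \rho.
\end{align*}
I would establish this via the classical exponential decay of the inverse mass matrix in the nodal basis $\{\varphi_i\}$ of $S^{1,1}_0(\T_h)$: on a shape-regular, quasi-uniform mesh, the scaled mass matrix $h^{-d}M$ is sparse with a bounded-degree sparsity graph and uniformly well-conditioned, so a Demko--Moss--Smith type argument yields $|(M^{-1})_{ij}| \leq C h^{-d} q^{d_{\T}(i,j)}$ with some fixed $q \in (0,1)$ and $d_{\T}(\cdot,\cdot)$ the graph distance in the mesh. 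Since the load-vector entries $\skp{\psi,\varphi_j}_{L^2(\Omega)}$ vanish for every node $j$ whose nodal patch is disjoint from $\supp \psi$, expanding the coefficient vector of $\Pi^{L^2}\psi$ in the nodal basis produces geometric decay $q^{c\rho/h}$, absorbed into $e^{-c'\rho/h}$.

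Applying this auxiliary claim to $\psi = (1-\chi)\phi$ with $\rho = c_0 h/3$ and the elementwise inverse inequality on the quasi-uniform mesh (legitimate because $D_0$ is a union of elements of $\T_h$) gives
\begin{align*}
 \norm{\Pi^{L^2}((1-\chi)\phi)}_{H^t(D_0)} \lesssim h^{-t}\norm{\Pi^{L^2}((1-\chi)\phi)}_{L^2(D_0)} \lesssim h^{-t} e^{-c/h}\norm{\phi}_{L^2(\Omega)},
\end{align*}
and the polynomial factor $h^{-t}$ is absorbed by slightly shrinking the exponential constant. Summing the inner and remote contributions, and using $\chi \equiv 1$ on $D_0$ for the approximation bound, closes both \eqref{eq:L2projectionH1} and \eqref{eq:L2projectionApprox}; apart from the exponential decay of $M^{-1}$, every step is a routine consequence of Lemma~\ref{lem:L2}, interpolation, and inverse estimates.
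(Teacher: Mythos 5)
Your proposal matches the paper's proof in its overall architecture: you decompose $\phi$ using a fixed cut-off supported in $D_1$ and identically one near $D_0$, treat the cut-off piece by the global stability and approximation bounds of $\Pi^{L^2}$, and identify the remote piece $\Pi^{L^2}((1-\chi)\phi)$ (which equals $\Pi^{L^2}\phi-\Pi^{L^2}(\chi\phi)$) as exponentially small on $D_0$. The only genuine difference is the tool used to establish that exponential smallness. The paper cites \cite[Lem.~7.1]{wahlbin91}, which asserts interior exponential decay for discrete functions that are $L^2$-orthogonal to all discrete test functions supported in an intermediate set $D_{1/2}$ -- and checks that $\Pi^{L^2}\phi-\Pi^{L^2}(\eta\phi)$ satisfies that orthogonality. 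You instead prove an equivalent fact by hand via the off-diagonal exponential decay of $M^{-1}$ (Demko--Moss--Smith type bound for the well-conditioned, sparsity-graph-structured scaled mass matrix). These two routes are morally the same -- exponential decay for $L^2$-projections and exponential decay of the inverse mass matrix are two formulations of one phenomenon -- so your version is essentially a self-contained replacement for the cited lemma; it buys independence from the Wahlbin reference at the cost of invoking the DMS-type bound, which you would still have to cite or prove.

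One point to correct: when you apply the auxiliary claim to $(1-\chi)\phi$, you set $\rho = c_0 h/3$, but then $e^{-c\rho/h} = e^{-cc_0/3}$ is merely a constant, not the claimed $e^{-c/h}$. Since $D_0,D_1$ are fixed sets (the constants in the lemma are allowed to depend on them) and $\chi$ is a fixed cut-off whose support properties do not degenerate with $h$, the separation $\dist(D_0,\supp(1-\chi))$ is in fact a fixed positive constant, so $\rho$ should be taken as that fixed constant rather than $c_0h/3$. You clearly understand this (you say so explicitly when choosing $\chi$), so this is a slip in the write-up rather than a conceptual gap; the paper's own choice of $D_{1/2}$ with $\dist(D_0,\partial D_{1/2})\geq c_0h/2$ carries the same unnecessary $h$-scaling and must likewise be read as a fixed intermediate set.
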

\begin{proof}
Note that~\eqref{eq:L2projectionH1} follows from~\eqref{eq:L2projectionApprox} with $t=1$ and the triangle inequality.
Hence, we focus on the latter estimate.

\emph{Step 1:}
Let  $D_{1/2}$ be a set satisfying $D_0\subset D_{1/2}\subset D_1$ with $\dist(D_0,\partial D_{1/2}) = \dist(D_{1/2},\partial D_{1}) \geq r/2$.
We use \cite[Lem.~7.1]{wahlbin91}, which states that discrete functions $\phi_h \in S^{1,1}(\T_h)$ satisfying the orthogonality
$\skp{\phi_h,\xi_h}_{L^2(\Omega)} = 0$ for all $\xi_h \in S^{1,1}_0(\T_h) $, $\supp \xi_h \subset D_{1/2}$, are exponentially
small locally, i.e., 
\begin{align}\label{eq:L2expsmall}
  \norm{\phi_h}_{L^2(D_0)} \lesssim e^{-c_1r/h_1} \norm{\phi_h}_{L^2(D_{1/2})};
\end{align}
here, one uses that the mesh on $\Omega_1$ is quasi-uniform.
We employ a cut-off function $\eta$ with $\eta \equiv 1$ on $D_{1/2}$, $\supp \eta \subset D_1$,
and $\| \eta \|_{W^{k,\infty}(\R^d)}\lesssim r^{-k}$.
By the definition of the $L^2$-projection, we compute
\begin{align*}
\skp{\Pi^{L^2}\phi,\xi_h}_{L^2(\Omega)} &= \skp{\phi,\xi_h}_{L^2(\Omega)} = \skp{\eta\phi,\xi_h}_{L^2(\Omega)} 
\\&=  
\skp{\Pi^{L^2}(\eta\phi),\xi_h}_{L^2(\Omega)} \forall  \xi_h \in S^{1,1}_0(\T_h), \supp \xi_h \subset D_{1/2}.
\end{align*}
Therefore, we may use \eqref{eq:L2expsmall} with $\phi_h = \Pi^{L^2}\phi-\Pi^{L^2}(\eta\phi)$ and conclude 
\begin{align}
\label{eq:lem:localL2-10}
  \norm{\Pi^{L^2}\phi-\Pi^{L^2}(\eta\phi)}_{L^2(D_0)} \lesssim e^{-c_1r/h_1} \norm{\Pi^{L^2}\phi-\Pi^{L^2}(\eta\phi)}_{L^2(D_{1/2})}.
\end{align}
\emph{Step 2:}Let $v \in H^1(D_1)$. The support properties of $\eta$ imply 
$\|( \operatorname{I}- \Pi^{L^2}) (\eta v)\|_{L^2(D_{1/2})} \leq \|\eta (\operatorname{I}- \Pi^{L^2}) (\eta v)\|_{L^2(\Omega)}$
and for arbitrary $w \in L^2(\Omega)$, using the local approximation properties of the Scott-Zhang operator $\mathcal{J}_h$, we calculate
\begin{align*}
& \left| \langle \eta (\operatorname{I} - \Pi^{L^2})(\eta v),w\rangle_{L^2(\Omega)} \right|  = 
\left| \langle (\operatorname{I} - \Pi^{L^2}) (\eta v), (\operatorname{I}-  \Pi^{L^2}) (\eta w)\rangle_{L^2(\Omega)} \right| \\
& \qquad = \left| \langle (\operatorname{I} - {\mathcal J}_h) (\eta v), (\operatorname{I}-  \Pi^{L^2}) (\eta w)\rangle_{L^2(\Omega)} \right| 
\lesssim h_1 \| v\|_{H^1(D_1)} \|w\|_{L^2(\Omega)}. 
\end{align*}
This implies
\begin{equation}\label{eq:lem:localL2-15}
  \|( \operatorname{I}- \Pi^{L^2}) (\eta v)\|_{L^2(D_0)} \leq
  \|( \operatorname{I} - \Pi^{L^2}) (\eta v)\|_{L^2(D_{1/2})} \lesssim h_1\|v\|_{H^1(D_1)}.
\end{equation}
Using the quasi-uniformity of the mesh ${\mathcal T}_{h,1}$, we apply an elementwise inverse estimate,
the triangle inequality,~\eqref{eq:lem:localL2-15}, the local approximation properties of the Scott-Zhang operator, and the properties of $\eta$,
to obtain
\begin{align*}
  \|\Pi^{L^2} (\eta v) - \mathcal{J}_h(\eta v)\|_{H^1(D_0)}
  &\lesssim h_1^{-1} \|\Pi^{L^2} (\eta v) - \eta v\|_{L^2(D_{1/2})}
    + h_1^{-1} \|\eta v - \mathcal{J}_h(\eta v)\|_{L^2(D_{1/2})}\\
    &\lesssim \| \eta v \|_{H^1(D_1)} \lesssim \| v \|_{H^1(D_1)}.
\end{align*}
Together with the triangle inequality, local stability properties of the Scott-Zhang operator, and the properties of $\eta$, this gives
\begin{align*}
  \|\eta v - \Pi^{L^2} (\eta v)\|_{H^1(D_0)} & \lesssim \|\Pi^{L^2} (\eta v) - \mathcal{J}_h(\eta v)\|_{H^1(D_0)} + \| v \|_{H^1(D_1)}
  \lesssim \| v \|_{H^1(D_1)}.
\end{align*}
Interpolating this with~\eqref{eq:lem:localL2-15} shows for $t\in[0,1]$
\begin{equation}\label{eq:lem:localL2-20}
  \|( 1- \Pi^{L^2}) (\eta v)\|_{H^t(D_0)} \lesssim h_1^{1-t} \|v\|_{H^1(D_1)}. 
\end{equation}
\emph{Step 3:} We use $\eta=1$ on $D_0$ and $t\leq 1$ to write
\begin{align*}
 \norm{\phi - \Pi^{L^2}\phi}_{H^t(D_0)} &\leq  \norm{\eta\phi - \Pi^{L^2}(\eta\phi)}_{H^t(D_0)} + \norm{\Pi^{L^2}\phi- \Pi^{L^2}(\eta\phi)}_{H^1(D_0)}. 
\end{align*}
The first term on the right-hand side is bounded as desired by~\eqref{eq:lem:localL2-20}, while for the second term we use an inverse estimate
and~\eqref{eq:lem:localL2-10},
\begin{align*}
 \norm{\Pi^{L^2}\phi- \Pi^{L^2}(\eta\phi)}_{H^1(D_0)} 
 &\lesssim h^{-1}_1\norm{\Pi^{L^2}\phi- \Pi^{L^2}(\eta\phi)}_{L^2(D_0)}\\
 &\lesssim h^{-1}_1 e^{-c_1r/h_1}\norm{\Pi^{L^2}\phi- \Pi^{L^2}(\eta\phi)}_{L^2(D_{1/2})}
\\ &
\lesssim e^{-cr/h_1}\norm{\phi}_{L^2(\Omega)},
\end{align*} 
where we used 
$h_1^{-1}e^{-c_1r/h_1}\lesssim e^{-cr/h_1}$.
\end{proof}

\begin{proof}[Proof of Theorem~\ref{thm:local}(\ref{item:thm:local-ii})]
With the triangle inequality and the $L^2$-orthogonal projection $\Pi^{L^2}$, we divide the error into three contributions
\begin{align*}
  \norm{e}_{H^1(\Omega_0)} &\leq \norm{\eta_0\eta_4 e}_{H^1(\Omega)} \\ &\leq \norm{\eta_0(\eta_4 e \!-\! \Pi(\eta_4 e))}_{H^1(\Omega)}\! + \!\norm{\eta_0\Pi^{L^2}\phi}_{H^1(\Omega)} \!\! + \!
  \norm{\eta_0(\Pi(\eta_4 e) \!-\! \Pi^{L^2}\phi)}_{H^1(\Omega)}  \\ &=: 
 {\large\textcircled{\normalsize \text{1}}}+{\large\textcircled{\normalsize \text{2}}}+{\large\textcircled{\normalsize \text{3}}} ,
\end{align*}
where 
\begin{align}
\label{eq:phi} 
\phi := \A^{-1}(\eta_4 \C_{\eta_4} e). 
\end{align}
The three terms 
$\textcircled{\text{1}}$, 
$\textcircled{\text{2}}$, 
$\textcircled{\text{3}}$, 
are estimated in the following in turn. 

{\bf Estimate of {\large\textcircled{\normalsize \text{1}}}:}
We use the bounds~\eqref{eq:Galerkinprojb} and~\eqref{eq:Galerkinproja} from Lemma~\ref{lem:superapprox} with $k=0$, $j=4$
and the triangle inequality to obtain
\begin{align}
\label{eq:estimate-circled-1}
  \textcircled{\normalsize\text{1}} & = \norm{\eta_0(\eta_4 e \!-\! \Pi(\eta_4 e))}_{H^1(\Omega)}
  \lesssim  \norm{\eta_4 u}_{H^1(\Omega)} + h_{1}\norm{u_h}_{H^1(\Omega_{5/10})} \\
\nonumber 
&\lesssim (1+h_{1})\norm{u}_{H^1(\Omega_{5/10})} + h_{1}\norm{e}_{H^1(\Omega_{5/10})}.
\end{align}

{\bf Estimate of {\large\textcircled{\normalsize \text{2}}}:}
By Lemma~\ref{lem:commutator} we have $\eta_4\mathcal{C}_{\eta_4} e \in H^{1-s}(\Omega)$, and Lemma~\ref{lem:intreg} then implies
$\eta_4\phi = \eta_4 \A^{-1}(\eta_4 \mathcal{C}_{\eta_4} e) \in H^{1+s}(\R^d)\subset H^1(\R^d)$. 
The local $H^1$-stability of the $L^2$-orthogonal projection from \eqref{eq:L2projectionH1}, applied with $D_0 = \Omega_{1/10}$ and $D_1=\Omega_{2/10}$  
that satisfy $\dist(D_0,\partial D_1) \ge  R/10 \geq 4h_1$, then implies
\begin{align*}
  \norm{\eta_0\Pi^{L^2}\phi}_{H^1(\Omega)} \lesssim \norm{\phi}_{H^1(\Omega_{2/10})} + e^{-c/h_1} \norm{\phi}_{L^2(\Omega)} \lesssim 
  \norm{\eta_4 \phi}_{H^1(\R^d)} + e^{-c/h_1} \norm{\phi}_{\widetilde H^s(\Omega)},
\end{align*}
where the constant $c$ additionally depends on $R$ but not on $h_{1}$.
Lemma~\ref{lem:intreg} yields
\begin{align*}
  \norm{\eta_4 \phi}_{H^1(\R^d)}= \norm{\eta_4\A^{-1}(\eta_4 \C_{\eta_4} e)}_{H^1(\Omega)}&
  \lesssim  \norm{\phi}_{\widetilde H^{s}(\Omega)} + \norm{\eta_4^2 \C_{\eta_4} e}_{\widetilde H^{1-2s}(\Omega)} \\
 & \lesssim \norm{\phi}_{\widetilde H^{s}(\Omega)}+ \norm{\eta_4\C_{\eta_4} e}_{H^{1-2s}(\Omega)}.
\end{align*}
As $1-2s<3/2-s$, the mapping properties of $\C_{\eta_4}$, $\C_{\eta_4,\eta_4}$ from Lemma~\ref{lem:commutator} 
lead to
\begin{align}\label{eq:estphih}
 \norm{\eta_4 \phi}_{H^1(\R^d)} &\lesssim \norm{\phi}_{\widetilde H^{s}(\Omega)}+\norm{\C_{\eta_4}(\eta_4 e)}_{H^{1-2s}(\Omega)} +\norm{\C_{\eta_4,\eta_4} e}_{H^{1-2s}(\Omega)} \nonumber \\
  &\lesssim \norm{\phi}_{\widetilde H^{s}(\Omega)}+\norm{\C_{\eta_4}(\eta_4 e)}_{H^{1-2s}(\Omega)} +\norm{\C_{\eta_4,\eta_4} e}_{H^{3/2-s}(\Omega)} \nonumber \\
  &\lesssim \norm{\phi}_{\widetilde H^{s}(\Omega)} + \norm{\eta_4 e}_{L^2(\Omega)} + \norm{e}_{H^{s-1/2}(\Omega)}.
\end{align}
It remains to bound $\norm{\phi}_{\widetilde H^{s}(\Omega)}$. 
Assumption~\ref{ass:shift} applied with 
right-hand side $\eta_4 \mathcal{C}_{\eta_4} e \in H^{1-s}(\Omega)$ gives
$\phi \in \widetilde H^{1/2+s-\varepsilon}(\Omega)$. Together with Lemma~\ref{lem:commutator}, this implies
\begin{align}\label{eq:estphihglob}
 \norm{\phi}_{\widetilde H^s(\Omega)} \leq \norm{\phi}_{\widetilde H^{1/2+s-\varepsilon}(\Omega)} &=
\norm{\A^{-1}(\eta_4\mathcal{C}_{\eta_4} e)}_{\widetilde H^{1/2+s-\varepsilon}(\Omega)} \\
\nonumber 
&\lesssim  \norm{\eta_4\mathcal{C}_{\eta_4} e}_{\widetilde H^{1/2-s-\varepsilon}(\Omega)} 
\lesssim \norm{e}_{H^{s-1/2}(\Omega)}.
\end{align}
We conclude 
\begin{equation}
\label{eq:estimate-circled-2}
\textcircled{\normalsize{\text{2}}} \lesssim \|\eta_4 e\|_{L^2(\Omega)} + \|e\|_{H^{s-1/2}(\Omega)}. 
\end{equation}

{\bf Estimate of {\large\textcircled{\normalsize \text{3}}}:}
Define the discrete function
\begin{equation}
\label{eq:psih}
\psi_h := \Pi(\eta_4 e)- \Pi^{L^2}\phi
\end{equation}
and decompose $\psi_h = v_h + w_h$,
where $v_h,$ $w_h \in S^{1,1}_0(\T_h)$ solve
\begin{align}
\label{eqvh}
 \skp{\A v_h,\xi_h}_{L^2(\Omega)} &= \skp{\eta_1 \A \psi_h,\xi_h}_{L^2(\Omega)} \quad \forall \xi_h \in S^{1,1}_0(\T_h), \\
\label{eqwh}
  \skp{\A w_h,\xi_h}_{L^2(\Omega)} &= \skp{(1-\eta_1) \A \psi_h,\xi_h}_{L^2(\Omega)} \quad \forall \xi_h \in S^{1,1}_0(\T_h).
\end{align}
These definitions and the unique solvability of the Galerkin formulation indeed give $\psi_h = v_h + w_h$, and 
we call this the {\bf near-field $(v_h)$/far-field $(w_h)$ splitting of $\psi_h$}.
With the triangle inequality, we write
\begin{align}\label{eq:splitting}
  {\large\textcircled{\normalsize \text{3}}} = \norm{\eta_0 \psi_h}_{H^1(\Omega)} \lesssim  \norm{\eta_0v_h}_{H^1(\Omega)} + \norm{\eta_0 w_h}_{H^1(\Omega)}.
\end{align}
{\bf Preliminary estimates:} To estimate the near-field and far-field, we prove the following estimates for the $L^2$-projection
\begin{align}
\label{eq:Hs-phi-pi-phi} 
\|\eta_2 (\phi - \Pi^{L^2} \phi) \|_{\widetilde{H}^s(\Omega)} & \lesssim h^{1-s}_1 \left( \|\eta_4 e\|_{L^2(\Omega)} + \|e\|_{H^{s-1/2}(\Omega)} \right), \\
\label{eq:Hs-1-phi-pi-phi} 
\|\phi - \Pi^{L^2} \phi\|_{\widetilde{H}^{s-1}(\Omega)} & \lesssim h\|\phi\|_{\widetilde{H}^s(\Omega)} \lesssim h \|e\|_{H^{s-1/2}(\Omega)}, \\
\label{eq:local-Aphi-pi-phi} 
\|\eta_2 {\mathcal A} (\phi - \Pi^{L^2}\phi) \|_{H^{-s}(\Omega)} &\lesssim h_1^{1-s} \|\eta_4 e\|_{L^2(\Omega)} +  (h_1^{1-s}+h) \|e\|_{H^{s-1/2}(\Omega)},
\end{align}
and the discrete function $\psi_h$
\begin{align}
\label{eq:local-phih} 
\|\eta_2 \psi_h\|_{\widetilde{H}^s(\Omega)} &\lesssim \|\eta_2 e\|_{\widetilde{H}^s(\Omega)} + h^{1-s}_1 \|\eta_4 e\|_{L^2(\Omega)} + \|e\|_{H^{s-1/2}(\Omega)}, \\
\label{eq:local-Aphih} 
\|\eta_2 {\mathcal A} \psi_h\|_{H^{-s}(\Omega)} &\lesssim \|\eta_4 e\|_{\widetilde{H}^s(\Omega)} + \|e\|_{H^{s-1/2}(\Omega)}.
\end{align} 
\emph{Proof of (\ref{eq:Hs-phi-pi-phi}):}  
Applying \eqref{eq:L2projectionApprox} with $D_0 = \Omega_{3/10}$ 
and $D_1 = \Omega_{4/10}$, which satisfy $\dist(D_0,\partial D_1) = R/10 \geq 4h_1$, we obtain
\begin{align*}
  \norm{\eta_2(\Pi^{L^2}\phi-\phi)}_{\widetilde{H}^{s}(\Omega)} &\lesssim h^{1-s}_1\norm{\phi}_{H^1(\Omega_{4/10})} + 
  e^{-c/h_1}\norm{\phi}_{L^2(\Omega)}\\
&\leq h^{1-s}_1\norm{\eta_4 \phi}_{H^1(\Omega)} + 
e^{-c/h_1}\norm{\phi}_{\widetilde H^s(\Omega)}\\
&\!\!\!\! \!\!\!\!\!\! \stackrel{(\ref{eq:estphih}),(\ref{eq:estphihglob})}{\lesssim} \!\! h^{1-s}_1 \left( \norm{e}_{H^{s-1/2}(\Omega)} + \norm{\eta_4 e}_{L^2(\Omega)} \right) + e^{-c/h_1}\norm{e}_{H^{s-1/2}(\Omega)} .
\end{align*}
Bounding $e^{-c/h_1} \lesssim h^{1-s}_1$ completes the proof
of (\ref{eq:Hs-phi-pi-phi}). 
\newline 
\emph{Proof of (\ref{eq:Hs-1-phi-pi-phi}):}  
For $s>1/2$, we obtain from Lemma~\ref{lem:L2} 
\begin{align*}
\norm{\Pi^{L^2}\phi-\phi}_{\widetilde H^{s-1}(\Omega)} \stackrel{\text{Lem.~\ref{lem:L2}}}{\lesssim} h  \norm{\phi}_{\widetilde H^s(\Omega)} \stackrel{\eqref{eq:estphihglob}}{\lesssim} h\norm{e}_{H^{s-1/2}(\Omega)}.
\end{align*}
For the case $s\leq 1/2$, we have $\phi \in \widetilde H^{1/2+s-\varepsilon}(\Omega)$ with $\varepsilon \in (0,s/2)$ given by
Assumption~\ref{ass:shift}. Together with Lemma~\ref{lem:L2} and \eqref{eq:estphihglob} this gives 
\begin{align*}
  \norm{\Pi^{L^2}\phi-\phi}_{\widetilde H^{s-1}(\Omega)} 
  \lesssim h \norm{\phi}_{\widetilde H^{1/2+s-\varepsilon}(\Omega)} 
  \lesssim h\norm{e}_{H^{s-1/2}(\Omega)}.
\end{align*}
\emph{Proof of~\eqref{eq:local-Aphi-pi-phi}:} The definition of commutators, ellipticity of $\mathcal{A}$, and Lemma~\ref{lem:commutator} show
\begin{align*}
  \|\eta_2 {\mathcal A} (\phi - \Pi^{L^2}\phi)\|_{H^{-s}(\Omega)}
  &\leq \|{\mathcal A}(\eta_2 (\phi - \Pi^{L^2} \phi))\|_{H^{-s}(\Omega)} + \|{\mathcal C}_{\eta_2} (\phi - \Pi^{L^2} \phi)\|_{H^{-s}(\Omega)}\\
  &\lesssim \|\eta_2 (\phi - \Pi^{L^2}\phi)\|_{\widetilde{H}^s(\Omega)} + \|\phi - \Pi^{L^2}\phi\|_{\widetilde{H}^{s-1}(\Omega)}\\
  &\stackrel{\eqref{eq:Hs-phi-pi-phi},\eqref{eq:Hs-1-phi-pi-phi}}{\lesssim}h_1^{1-s} \|\eta_4 e\|_{L^2(\Omega)} +  (h_1^{1-s} + h) \|e\|_{H^{s-1/2}(\Omega)}.
\end{align*}
\emph{Proof of (\ref{eq:local-phih}):} The definition of $\psi_h$ and stability of the Galerkin projection imply
\begin{align*}
\|\eta_2 \psi_h\|_{\widetilde{H}^s(\Omega)} &\leq \|\eta_2 \Pi (\eta_4 e)\|_{\widetilde{H}^s(\Omega)} + \|\eta_2 (\phi - \Pi^{L^2}\phi)\|_{\widetilde{H}^s(\Omega)} 
+ \|\eta_2 \phi\|_{\widetilde{H}^s(\Omega)}, \\
&\stackrel{(\ref{eq:Hs-phi-pi-phi}), (\ref{eq:estphihglob})}{\lesssim} \|\eta_4 e\|_{\widetilde{H}^s(\Omega)} + h^{1-s}_1 \|\eta_4 e\|_{L^2(\Omega)} + 
\|e\|_{H^{s-1/2}(\Omega)}. 
\end{align*}
\emph{Proof of (\ref{eq:local-Aphih}):} Ellipticity of $\mathcal{A}$, stability of the Galerkin projection, Lemma~\ref{lem:commutator},
$\| \cdot \|_{L^2(\Omega)} \leq \| \cdot \|_{\widetilde H^s(\Omega)}$ and $h,h_1\lesssim 1$ imply
\begin{align*}
 \|\eta_2 {\mathcal A} \psi_h\|_{H^{-s}(\Omega)} & \leq 
\|\eta_2 {\mathcal A} \Pi (\eta_4 e)\|_{H^{-s}(\Omega)} 
+ \|\eta_2 {\mathcal A} (\phi - \Pi^{L^2}\phi)\|_{H^{-s}(\Omega)} 
+ \|\eta_2 {\mathcal A} \phi\|_{H^{-s}(\Omega)} \\
&\lesssim \| \eta_4 e\|_{\widetilde{H}^s(\Omega)} + \|\eta_2 {\mathcal A} (\phi - \Pi^{L^2}\phi)\|_{H^{-s}(\Omega)}
+ \|\phi\|_{\widetilde{H}^{s}(\Omega)}\\
&\stackrel{\eqref{eq:local-Aphi-pi-phi},\eqref{eq:estphihglob}}{\lesssim} \|\eta_4 e\|_{\widetilde{H}^s(\Omega)} + h_1^{1-s} \|\eta_4 e\|_{L^2(\Omega)} + \|e\|_{H^{s-1/2}(\Omega)}\\
&\lesssim \|\eta_4 e\|_{\widetilde{H}^s(\Omega)} + \|e\|_{H^{s-1/2}(\Omega)}.
\end{align*}
We also note that $\psi_h$ satisfies
a useful equation: For $\xi_h \in S^{1,1}_0(\T_h)$ with $\supp \xi_h \subset \overline{\omega_{\eta_1}}\subset \Omega_{2/10}$, 
the Galerkin orthogonality and $\eta_4 \equiv 1$ on $\omega_{\eta_1}$ imply
\begin{align}\label{eq:ortho}
0 &= \skp{\A e,\xi_h}_{L^2(\Omega)} = \skp{\A e,\eta_4 \xi_h}_{L^2(\Omega)} = \skp{\A(\eta_4 e)-\C_{\eta_4}e, \xi_h}_{L^2(\Omega)}\nonumber \\ &=  
\skp{\A(\Pi(\eta_4 e))-\eta_4\C_{\eta_4}e, \xi_h}_{L^2(\Omega)} =
 \skp{\A(\Pi(\eta_4 e)-\A^{-1}(\eta_4\C_{\eta_4}e)), \xi_h}_{L^2(\Omega)} \nonumber \\ &=  
  \skp{\A\psi_h, \xi_h}_{L^2(\Omega)} +  \skp{\A(\Pi^{L^2}\phi-\phi), \xi_h}_{L^2(\Omega)} .
\end{align}

{\bf Estimate of the near-field $\|\eta_0 v_h\|_{\widetilde{H}^1(\Omega)}$:} We exploit locality properties of the near-field to show the estimate
\begin{align}\label{eq:NFest}
  \norm{\eta_0v_h}_{H^1(\Omega)} \lesssim  h_1^{s} \norm{\eta_4 e}_{\widetilde H^s(\Omega)} + \|\eta_4 e\|_{L^2(\Omega)}  + (h/h_1)^{1-s} \norm{e}_{H^{s-1/2}(\Omega)}. 
\end{align}
To see \eqref{eq:NFest}, we start with the inverse inequality~\eqref{invest:a}
\begin{align}\label{eq:NFest-a}
  \norm{\eta_0v_h}_{H^1(\Omega)} \lesssim h_1^{s-1} \norm{v_h}_{H^s(\Omega_{1/10})} \lesssim h_1^{s-1} \|v_h\|_{\widetilde{H}^s(\Omega)}.
\end{align}
The mapping property and ellipticity of $\A$, the definition of $v_h$, and \eqref{eq:ortho} 
imply
\begin{align*}
 &\norm{v_h}_{\widetilde H^s(\Omega)} = \sup_{w \in H^{-s}(\Omega)}\frac{\skp{v_h,w}_{L^2(\Omega)}}{\norm{w}_{H^{-s}(\Omega)}} \lesssim
 \sup_{\varphi \in \widetilde H^s(\Omega)}\frac{\skp{v_h,\A \varphi}_{L^2(\Omega)}}{\norm{\varphi}_{\widetilde H^{s}(\Omega)}}  \nonumber \\
&\qquad=   \sup_{\varphi \in \widetilde H^s(\Omega)}\frac{\skp{v_h,\A \Pi\varphi}_{L^2(\Omega)}}{\norm{\varphi}_{\widetilde H^{s}(\Omega)}}
= \sup_{\varphi \in \widetilde H^s(\Omega)}\frac{\skp{\eta_1 \A \psi_h, \Pi\varphi}_{L^2(\Omega)}}{\norm{\varphi}_{\widetilde H^{s}(\Omega)}} \nonumber  \\
&\qquad \stackrel{\eqref{eq:ortho}}{=} \sup_{\varphi \in \widetilde H^s(\Omega)}\frac{\skp{\A \psi_h, \eta_1\Pi\varphi-\mathcal{J}_h(\eta_1\Pi\varphi)}_{L^2(\Omega)}
-\skp{\A(\Pi^{L^2}\phi-\phi), \mathcal{J}_h(\eta_1\Pi\varphi)}_{L^2(\Omega)}}{\norm{\varphi}_{\widetilde H^{s}(\Omega)}}.
 \end{align*}
 For the first term in the numerator, we use~\eqref{eq:SZ:supp}, in particular $\eta_2 \equiv 1$ on
 $\operatorname{supp} (\eta_1 \Pi \varphi - {\mathcal J}_h (\eta_1 \Pi \varphi))$, and
we may estimate using the bound~\eqref{eq:SZ} and $\| \cdot \|_{H^s(\Omega_{3/10})}\lesssim \| \cdot \|_{\widetilde H^s(\Omega)}$
 \begin{align*}
  \abs{\skp{\A \psi_h, \eta_1\Pi\varphi-\mathcal{J}_h(\eta_1\Pi\varphi)}_{L^2(\Omega)}} &\lesssim
  \norm{\eta_2 \A \psi_h}_{H^{-s}(\Omega)}h_1\norm{\Pi \varphi}_{\widetilde H^s(\Omega)} \\ &\stackrel{(\ref{eq:local-Aphih})}{\lesssim }
  h_1 \left(\|\eta_4 e\|_{\widetilde{H}^s(\Omega)} + \|e\|_{H^{s-1/2}(\Omega)} \right) \norm{\varphi}_{\widetilde H^s(\Omega)}.
 \end{align*}
For the second term in the numerator, we again use that $\eta_2 \equiv 1$ on $\operatorname{supp} {\mathcal J}_h (\eta_1 \Pi \varphi)$ to estimate  
with (\ref{eq:local-Aphi-pi-phi}) and the stability of the operator ${\mathcal J}_h$
\begin{align*}
 &  \abs{\skp{\A(\Pi^{L^2}\phi-\phi), \mathcal{J}_h(\eta_1\Pi\varphi)}_{L^2(\Omega)}} 
\lesssim \|\eta_2 {\mathcal A} (\phi - \Pi^{L^2}\phi)\|_{H^{-s}(\Omega)} \|{\mathcal J}_h (\eta_1 \Pi\varphi)\|_{\widetilde{H}^s(\Omega)} \\
&\qquad\qquad\qquad\stackrel{(\ref{eq:local-Aphi-pi-phi})}{\lesssim} \left(h_1^{1-s} \|\eta_4 e\|_{L^2(\Omega)} + (h_1^{1-s} +h) \|e\|_{H^{s-1/2}(\Omega)}\right) \|\varphi\|_{\widetilde{H}^s(\Omega)}. 
\end{align*}
Combining the previous three formulas, we obtain
\begin{equation}
\label{eq:estimate-vh}
\|v_h\|_{\widetilde{H}^s(\Omega)} \lesssim h_1 \|\eta_4 e\|_{\widetilde{H}^s(\Omega)} + h_1^{1-s} \|\eta_4 e\|_{L^2(\Omega)} + (h_1^{1-s} + h) \|e\|_{H^{s-1/2}(\Omega)}. 
\end{equation}
Inserting (\ref{eq:estimate-vh}) in (\ref{eq:NFest-a}) yields (\ref{eq:NFest}) since $h_1 \leq h$. 

{\bf Estimate of the far-field $\|\eta_0 w_h\|_{\widetilde{H}^1(\Omega)}$:}
We remark that the triangle inequality gives 
$\|\eta_2 w_h\|_{\widetilde{H}^s(\Omega)} \leq \|\eta_2 \psi_h \|_{\widetilde{H}^s(\Omega)} + \|v_h\|_{\widetilde{H}^s(\Omega)} $ 
and therefore the estimate for the near-field provides
\begin{align}
\nonumber 
\|\eta_2 w_h\|_{\widetilde{H}^s(\Omega)} 
& \stackrel{(\ref{eq:local-phih}), (\ref{eq:estimate-vh})}{\lesssim} 
\|\eta_2 e\|_{\widetilde{H}^s(\Omega)} + h_1 \|\eta_4 e\|_{\widetilde{H}^s(\Omega)} + h_1^{1-s}\|\eta_4 e\|_{L^2(\Omega)} + \|e\|_{H^{s-1/2}(\Omega)}  \\
\label{eq:est-wh}
&\lesssim \|\eta_4 e\|_{\widetilde{H}^s(\Omega)} + \|e\|_{H^{s-1/2}(\Omega)}. 
\end{align}
To obtain an estimate for the $H^1$-norm of the far-field, we exploit additional smoothness properties of a suitable Caffarelli-Silvestre extension problem to show the estimate
\begin{align}\label{eq:FFest}
\|\eta_0 w_h\|_{H^1(\Omega)} 
 \lesssim \bigl(1+h_1^{s-1} h^{1-2\varepsilon}\bigr)
 \left( 
   \|\eta_4 e\|_{\widetilde{H}^s(\Omega)} + \|e\|_{H^{s-1/2}(\Omega)}
 \right) 
\end{align}
for $\varepsilon > 0$ given by Assumption~\ref{ass:shift}.
Set 
\begin{equation}
\label{eq:w,psi}
w:= {\mathcal A}^{-1} \left((1-\eta_1){\mathcal A}\psi_h\right) \in \widetilde{H}^s(\Omega), 
\qquad \psi:= \Pi (\eta_4 e) - \phi \in \widetilde{H}^s(\Omega). 
\end{equation}
These definitions directly imply
\begin{align}\label{eq:rep-of-w}
w_h = \Pi w \qquad \text{ and } \qquad  w & = (1-\eta_1) \psi_h  + {\mathcal A}^{-1} {\mathcal C}_{\eta_1} \psi_h, 
\end{align}
and we claim the following two assertions: 
\begin{align}
\label{eq:Cpsi}
\|{\mathcal A}^{-1} {\mathcal C}_{\eta_1} \psi\|_{\widetilde{H}^{s+1/2-\varepsilon}(\Omega)} 
&\lesssim \|\eta_1 e\|_{\widetilde{H}^s(\Omega)} + \|e\|_{H^{s-1/2}(\Omega)}, \\
\label{eq:local-w}
\|\eta_2 w\|_{\widetilde{H}^s(\Omega)} & \lesssim \|\eta_4 e\|_{\widetilde{H}^s(\Omega)} + \|e\|_{H^{s-1/2}(\Omega)}. 
\end{align}
\emph{Proof of (\ref{eq:Cpsi}):} We use Assumption~\ref{ass:shift} and Lemma~\ref{lem:commutator} to get 
\begin{align*}
\|{\mathcal A}^{-1}{\mathcal C}_{\eta_1} \psi\|_{H^{s+1/2-\varepsilon}(\Omega)} &\lesssim 
\|{\mathcal C}_{\eta_1} \psi\|_{H^{-s+1/2-\varepsilon}(\Omega)} 
 \lesssim 
\|\psi\|_{\widetilde H^{s-1/2-\varepsilon}(\Omega)}  \\
& \leq \|\Pi (\eta_4 e)\|_{\widetilde{H}^s(\Omega)} + \|\phi\|_{\widetilde{H}^s(\Omega)} 
\stackrel{(\ref{eq:estphihglob})}{\lesssim}  \|\eta_4 e\|_{\widetilde{H}^s(\Omega)} + \|e\|_{H^{s-1/2}(\Omega)}. 
\end{align*}
\emph{Proof of (\ref{eq:local-w}):} Using (\ref{eq:rep-of-w}) and the definition of $\psi_h$, we write 
\begin{align*}
\eta_2 w & = \eta_2 (1-\eta_1) \psi_h + \eta_2 {\mathcal A}^{-1} {\mathcal C}_{\eta_1} \psi + \eta_2 {\mathcal A}^{-1} {\mathcal C}_{\eta_1} (\phi - \Pi^{L^2} \phi). 
\end{align*}
With the triangle inequality and the fact that
$\|\eta_2\eta_1\psi_h \|_{\widetilde H^s(\Omega)} \lesssim \|\eta_2\psi_h \|_{\widetilde H^s(\Omega)}$, the first term can be estimated 
using~\eqref{eq:local-phih}. The second term is estimated by~\eqref{eq:Cpsi}, using $s \leq s+1/2-\varepsilon$. To bound the third term,
we use the ellipticity of $\mathcal{A}$, Lemma~\ref{lem:commutator}, and~\eqref{eq:Hs-1-phi-pi-phi}. Then, using
$\| \eta_2 e\|_{\widetilde H^s(\Omega)}\lesssim \| \eta_4 e \|_{\widetilde H^s(\Omega)}$ and $h\lesssim 1$, we obtain (\ref{eq:local-w}). 
\bigskip

We continue with the proof of~\eqref{eq:FFest}.
Using $\eta_1\equiv 1$ on $\supp\eta_0$ and the triangle inequality, we get 
\begin{align*}
& \norm{\eta_0 w_h}_{H^1(\Omega)} =
\norm{\eta_1\eta_0 w_h}_{H^1(\Omega)}\\
&\quad \quad \leq \norm{\eta_1\left(\eta_0 w_h - \Pi(\eta_0 w_h)\right)}_{H^1(\Omega)} + 
\norm{\eta_1\Pi(\eta_0(w- w_h ))}_{H^1(\Omega)} + \norm{\eta_1\Pi(\eta_0 w)}_{H^1(\Omega)}\\ 
&
  \quad \quad 
=:
{\large\textcircled{\normalsize \text{a}}}+{\large\textcircled{\normalsize \text{b}}}+{\large\textcircled{\normalsize \text{c}}}. 
\end{align*}
The bound~\eqref{eq:Galerkinprojb} from Lemma~\ref{lem:superapprox}, $\eta_1\equiv 1$ on $\Omega_{1/10}$ and $\eta_2 \equiv 1$ on $\Omega_{2/10}$, 
together with the inverse estimate~\eqref{invest:a} give
\begin{align*}
  {\large\textcircled{\normalsize \text{a}}} &= \norm{\eta_1\left(\eta_0 w_h - \Pi(\eta_0 w_h)\right)}_{H^1(\Omega)}
  \lesssim h_1 \norm{w_h}_{H^1(\Omega_{1/10})} \leq h_1 \norm{\eta_1w_h}_{H^1(\Omega)}\\
  &\stackrel{(\ref{invest:a})}{\lesssim} h_1^{s} \norm{w_h}_{H^s(\Omega_{2/10})} \lesssim h_1^s \|\eta_2 w_h\|_{\widetilde{H}^s(\Omega)} 
\stackrel{(\ref{eq:est-wh})}{\lesssim} h_1^s \left( \|\eta_4 e\|_{\widetilde{H}^s(\Omega)} + \|e\|_{H^{s-1/2}(\Omega)}\right). 
\end{align*}
To bound ${\large\textcircled{\normalsize \text{b}}}$ we write $e_w := w-w_h$ and apply
the inverse estimate~\eqref{invest:a} 
to obtain 
\begin{align}\label{eq:estb}
  {\large\textcircled{\normalsize \text{b}}} = \norm{\eta_1\Pi(\eta_0(w- w_h ))}_{H^1(\Omega)}
\lesssim h_1^{s-1} \norm{\Pi(\eta_0 e_w)}_{\widetilde H^{s}(\Omega)}. 
\end{align}
The first identity in~\eqref{eq:rep-of-w}, Galerkin orthogonality, ellipticity of $\A$, 
the definition of the commutator $\C_{\eta_0}$, the bound~\eqref{eq:SZ},  
and $\| \cdot \|_{H^s(\Omega_{2/10})}\lesssim \| \cdot \|_{\widetilde H^s(\Omega)}$
lead to
\begin{align*}
\norm{\Pi(\eta_0 e_w)}_{\widetilde{H}^s(\Omega)}^2&\lesssim
\skp{\A(\Pi(\eta_0 e_w)),\Pi(\eta_0 e_w)}_{L^2(\Omega)} = \skp{\A(\eta_0 e_w),\Pi(\eta_0 e_w)}_{L^2(\Omega)} \\
 &=\skp{\eta_0 \A e_w + \C_{\eta_0} e_w,\Pi(\eta_0 e_w)}_{L^2(\Omega)} \\&= 
\skp{\A e_w ,\eta_0\Pi(\eta_0 e_w)-\mathcal{J}_h(\eta_0\Pi(\eta_0 e_w))}_{L^2(\Omega)} + 
\skp{\C_{\eta_0} e_w,\Pi(\eta_0 e_w)}_{L^2(\Omega)} \\
&\lesssim h_1\norm{\A e_w}_{H^{-s}(\Omega)} \norm{\Pi(\eta_0 e_w)}_{\widetilde  H^s(\Omega)} + 
\norm{\C_{\eta_0}e_w}_{H^{-s}(\Omega)}\norm{\Pi(\eta_0 e_w)}_{\widetilde H^s(\Omega)} 
\\&\stackrel{Lem.~\ref{lem:commutator}}{\lesssim} \norm{\Pi(\eta_0 e_w)}_{\widetilde H^s(\Omega)}\left(h_1\norm{e_w}_{\widetilde H^{s}(\Omega)}
+\norm{e_w}_{\widetilde H^{s-1}(\Omega)}\right).
\end{align*}
Next, we estimate $\|e_w\|_{\widetilde{H}^s(\Omega)} = \|w - \Pi w\|_{\widetilde{H}^s(\Omega)}$.
Note that
\begin{align*}
  w-\Pi w = w-(1-\eta_1)\psi_h - \Pi (w-(1-\eta_1)\psi_h) + \eta_1\psi_h - \Pi (\eta_1\psi_h).
\end{align*}
Hence, triangle inequality and quasioptimality of the Galerkin projection $\Pi$ give 
\begin{align*}
\|e_w\|_{\widetilde{H}^s(\Omega)} & \leq \underbrace{ \inf_{z_h \in S^{1,1}_0(\T_h)} \|w - (1-\eta_1) \psi_h - z_h\|_{\widetilde{H}^s(\Omega)} }_{=:T_1} 
+ \underbrace{
\inf_{z'_h \in S^{1,1}_0(\T_h)} \|\eta_1 \psi_h - z'_h\|_{\widetilde{H}^s(\Omega)} }_{=:T_2}.
\end{align*}
The term $T_2$ is controlled with (\ref{eq:SZ}) and gives 
\begin{align*}
T_2 
\stackrel{(\ref{eq:SZ})}{\lesssim} h_1 \|\eta_2 \psi_h \|_{\widetilde{H}^s(\Omega)} 
\stackrel{(\ref{eq:local-phih})}{\lesssim} h_1 \left( \|\eta_2 e\|_{\widetilde{H}^s(\Omega)} + h^{1-s}_1 \|\eta_4 e\|_{L^2(\Omega)} + \|e\|_{H^{s-1/2}(\Omega)}\right). 
\end{align*}
For $T_1$, we use (\ref{eq:rep-of-w}) to get 
$w - (1-\eta_1) \psi_h = {\mathcal A}^{-1} {\mathcal C}_{\eta_1} \psi_h 
= {\mathcal A}^{-1} {\mathcal C}_{\eta_1} \psi + {\mathcal A}^{-1}{\mathcal C}_{\eta_1} (\psi_h - \psi)
= {\mathcal A}^{-1} {\mathcal C}_{\eta_1} \psi + {\mathcal A}^{-1}{\mathcal C}_{\eta_1} (\Pi^{L^2} \phi - \phi)$ and then estimate 
\begin{align*}
T_1 
& \lesssim 
\inf_{z_h \in S^{1,1}_0(\T_h)} \|{\mathcal A}^{-1}{\mathcal C}_{\eta_1} \psi - z_h\|_{\widetilde{H}^s(\Omega)} + \|{\mathcal C}_{\eta_1} (\phi -\Pi^{L^2} \phi)\|_{H^{-s}(\Omega)} \\
& \lesssim h^{1/2-\varepsilon} \|{\mathcal A}^{-1} {\mathcal C}_{\eta_1} \psi\|_{\widetilde{H}^{s+1/2-\varepsilon}(\Omega)} + \|\phi - \Pi^{L^2}\phi\|_{\widetilde{H}^{s-1}(\Omega)} \\
&\stackrel{(\ref{eq:Cpsi}), (\ref{eq:Hs-1-phi-pi-phi})}{\lesssim } h^{1/2-\varepsilon}  \left(\|\eta_1 e\|_{\widetilde{H}^s(\Omega)} + \|e\|_{H^{s-1/2}(\Omega)}\right) + 
h \|e\|_{H^{s-1/2}(\Omega)}. 
\end{align*}
We conclude using $h_1 \leq h \lesssim 1$
\begin{equation}
\label{eq:energy-estimate-ew}
\|e_w\|_{\widetilde{H}^s(\Omega)} \lesssim h^{1/2-\varepsilon} \|\eta_4 e\|_{\widetilde{H}^s(\Omega)} 
+ h^{1/2-\varepsilon}\|e\|_{H^{s-1/2}(\Omega)}.
\end{equation}
For $\|e_w\|_{\widetilde{H}^{s-1}(\Omega)}$ we employ a standard duality argument, noting that the maximal exploitable regularity is $\widetilde H^{1/2+s-\varepsilon}(\Omega)$ to obtain
\begin{align*}
 \norm{e_w}_{\widetilde H^{s-1}(\Omega)} 
& \lesssim  
 \norm{e_w}_{\widetilde H^{-1/2+s+\varepsilon}(\Omega)} \lesssim h^{1/2-\varepsilon} \norm{e_w}_{\widetilde H^s(\Omega)}.
\end{align*}
Inserting everything into \eqref{eq:estb} leads to
\begin{align*}
  {\large\textcircled{\normalsize \text{b}}} \lesssim 
 h_1^{s-1} h^{1-2\varepsilon} 
 \left( 
   \|\eta_4 e\|_{\widetilde{H}^s(\Omega)} + \|e\|_{H^{s-1/2}(\Omega)}
   \right).
\end{align*}
To estimate {\large\textcircled{\normalsize \text{c}}},
we denote by $U_{\rm far}$ the solution of the extension problem \eqref{eq:extension} with data $\operatorname{tr}U_{\rm far} = \eta_2 w$.
We apply a variation of \cite[Lemma~4.3]{FMP19} with open sets $B_x \times B_y =: B \subset B' := B_x' \times B_y'$
and an extension $\widehat{\eta_0}$ of the cut-off function $\eta_0$ satisfying $\supp \widehat{\eta_0} \subset B'$. 
Instead of prescribing support properties of $\operatorname*{tr} U_{\rm far}$ as in \cite[Lemma~4.3]{FMP19}, an inspection of
the proof therein shows that 
choosing the same test function $V$ in the weak formulation of \eqref{eq:extension} for the difference quotient argument works,
as long as $\operatorname*{tr} V \cdot (\mathcal{Y}^\alpha \partial_{\mathcal{Y}}U_{\rm far})|_{y=0} \equiv 0$.
The said test function $V$ is a second order difference quotient of $\widehat{\eta_0}U_{\rm far}$ and therefore its
trace is supported in $\Omega_{1/10}$. Since 
\begin{align*}
-d_s(\mathcal{Y}^\alpha \partial_{\mathcal{Y}}U_{\rm far})|_{y=0} = \mathcal{A} (\eta_2 w),
\end{align*}
the assumption $\eta_1 \equiv 1$ on $\Omega_{1/10}$ indeed gives $\operatorname*{tr} V \cdot (\mathcal{Y}^\alpha \partial_{\mathcal{Y}}U_{\rm far})|_{y=0} \equiv 0$ and therefore the arguments of \cite[Lemma~4.3]{FMP19} imply
\begin{align}\label{eq:CaccCS}
 \norm{D_x(\nabla U_{\rm far})}_{L^2_\alpha(B)} \lesssim \norm{U_{\rm far}}_{H^1_\alpha(B')}.
\end{align}
The $H^1$-stability~\eqref{eq:Galerkinproja} of the Galerkin projection from Lemma~\ref{lem:superapprox}, the multiplicative trace inequality from \cite[Lemma~3.7]{KarMel18}, and the Lax-Milgram Lemma then give 
\begin{align*}
  \large\textcircled{\normalsize \text{c}} &= \norm{\eta_1\Pi(\eta_0 w)}_{H^1(\Omega)}\lesssim   \norm{\eta_0 w}_{H^1(\Omega)} \lesssim  \abs{\eta_0 w}_{H^1(\Omega)} + \norm{\eta_0 w}_{L^2(\Omega)} \\ &\lesssim
 \norm{\nabla_x U_{\rm far}}_{L^2_\alpha(B)} +   \norm{\nabla_x U_{\rm far}}_{L^2_\alpha(B)}^{(1-\alpha)/2}
 \norm{\partial_{\mathcal{Y}}\nabla_x U_{\rm far}}_{L^2_\alpha(B)}^{(1+\alpha)/2}+ \norm{U_{\rm far}}_{H^1_\alpha(B)} \\ 
 &\stackrel{\eqref{eq:CaccCS}}{\lesssim}  \norm{U_{\rm far}}_{H^1_\alpha(B')} \lesssim \norm{\eta_2 w}_{\widetilde H^s(\Omega)} 
\stackrel{(\ref{eq:local-w})}  {\lesssim} \|\eta_4 e\|_{\widetilde{H}^s(\Omega)} + \|e\|_{H^{s-1/2}(\Omega)}. 
\end{align*}
Combining the estimates for {\large\textcircled{\normalsize \text{a}}}, {\large\textcircled{\normalsize \text{b}}}, {\large\textcircled{\normalsize \text{c}}} 
gives the stated bound (\ref{eq:FFest}) for $\|\eta_0 w_h\|_{H^1(\Omega)}$. 

Putting together the estimates for
{\large\textcircled{\normalsize \text{1}}}, {\large\textcircled{\normalsize \text{2}}}, and {\large\textcircled{\normalsize \text{3}}} 
and observing that $2 \varepsilon < s$ and $h_1 \leq h$, we arrive at 
\begin{align}
\label{eq:H1-estimate-prelim}
  & \abs{\eta_0 e}_{H^1(\Omega)} \lesssim \\
\nonumber 
 & \qquad \ \norm{u}_{H^1(\Omega_{5/10})} \!+\! h_1\| e \|_{H^1(\Omega_{5/10})}\! +\! (h/h_1)^{1-s} \left[ \norm{\eta_4 e}_{\widetilde{ H}^s(\Omega)}+ \|e\|_{H^{s-1/2}(\Omega)}\right]. 
\end{align}
Let $v_h \in S^{1,1}_0(\T_h)$ be arbitrary. Noting $u - v_h - \Pi(u - v_h) = u - u_h = e$ and applying 
(\ref{eq:H1-estimate-prelim}) to $u - v_h$, we get 
\begin{align}
\label{eq:H1-estimate-prelim-10}
   \abs{\eta_0 e}_{H^1(\Omega)} & \lesssim 
\norm{u-v_h}_{H^1(\Omega_{5/10})} \\
\nonumber 
& \quad \mbox{}+  h_1\|e \|_{H^1(\Omega_{5/10})}\! +\! (h/h_1)^{1-s} \left[ \norm{\eta_4 e}_{\widetilde{ H}^s(\Omega)}+ \|e\|_{H^{s-1/2}(\Omega)}\right].
\end{align}
Using the triangle inequality and inverse estimate \eqref{invest:a} yields 
\begin{align*}
h_1\|e\|_{H^1(\Omega_{5/10})} & \leq h_1\|u- v_h\|_{H^1(\Omega_{5/10})} + h_1\|u_h - v_h\|_{H^1(\Omega_{5/10})} \\
& \lesssim h_1\|u- v_h\|_{H^1(\Omega_{5/10})} + h_1^{s} \|u_h - v_h\|_{H^{s}(\Omega_{6/10})} \\ 
& \lesssim  h_1^s\|u- v_h\|_{H^1(\Omega_{1})} + h_1^{s} \|\eta_6 e\|_{H^{s}(\Omega)}. 
\end{align*}
Inserting this in (\ref{eq:H1-estimate-prelim-10}) and using the estimate of Theorem~\ref{thm:local}(\ref{item:thm:local-i}) (with $\eta_0$ replaced by $\eta_6$ and 
correspondingly $\eta_2$ on the right-hand side replaced with $\eta_8$) yields the claimed statement since $\operatorname{supp} \eta_8 \subset \Omega_1$.
\end{proof}

With a classical duality argument and exploiting the (local and global) regularity of $u$, Theorem~\ref{thm:local}
immediately implies Corollary~\ref{cor:local}.

\begin{proof}[Proof of Corollary~\ref{cor:local}]
We only show the second statement of the corollary, the first statement follows with exactly the same arguments.

The assumptions on the local and global regularity directly imply, see, e.g., \cite{AB17}, that
\begin{align*}
 \norm{u-u_h}_{\widetilde H^s(\Omega)} &\lesssim h^{\alpha}\norm{u}_{\widetilde H^{s+\alpha}(\Omega)}, \\
 \inf_{v_h \in S^{1,1}_0(\T_h)} \abs{u-v_h}_{H^1(\Omega_1)} &\lesssim h^{\min\{\beta,1\}}\norm{u}_{H^{1+\beta}(\Omega_2)}.
\end{align*}
Moreover, since $\T_h$ is quasi-uniform, we have that $h/h_1 \lesssim 1$ and the local $H^s$-norm is treated with the first statement of the corollary.
For the global term in Theorem~\ref{thm:local} and arbitrary $\varepsilon>0$ such that Assumption~\ref{ass:shift} holds, we use a duality argument together with Assumption~\ref{ass:shift}, the Galerkin orthogonality and approximation properties of the Scott-Zhang projection $\mathcal{J}_h$ to estimate
\begin{align*}
 &\norm{u-u_h}_{H^{s-1/2}(\Omega)} 
\lesssim \norm{u - u_h}_{\widetilde H^{s-1/2}(\Omega)} 
\lesssim \norm{u - u_h}_{\widetilde H^{s-1/2+\varepsilon}(\Omega)} 
 \\&\qquad\qquad= \sup_{w\in H^{1/2-s-\varepsilon}(\Omega)} \frac{\skp{u-u_h,w}_{L^2(\Omega)}}{\norm{w}_{H^{1/2-s-\varepsilon}(\Omega)}}
 \lesssim \sup_{v\in \widetilde H^{1/2+s-\varepsilon}(\Omega)} \frac{\abs{\skp{u-u_h,\A v}_{L^2(\Omega)}}}{\norm{v}_{\widetilde H^{1/2+s-\varepsilon}(\Omega)}} \\ &\qquad\qquad= 
 \sup_{v\in \widetilde H^{1/2+s-\varepsilon}(\Omega)} \frac{\abs{\skp{\A(u-u_h),v-\mathcal{J}_h v}_{L^2(\Omega)}}}{\norm{v}_{\widetilde H^{1/2+s-\varepsilon}(\Omega)}} 
 \\&\qquad\qquad 
 \lesssim h^{1/2-\varepsilon} \norm{\A(u-u_h)}_{H^{-s}(\Omega)} \lesssim h^{1/2-\varepsilon} \norm{u-u_h}_{\widetilde H^s(\Omega)},
\end{align*} 
which finishes the proof of the corollary.
\end{proof}

\begin{remark}
\label{remk:3.9}
As mentioned in Remark~\ref{rem:graded} and Remark~\ref{rem:gradedH1}, graded meshes can also be employed. 
Therefore, using duality arguments, the rate of convergence locally can also be deduced as in the 
previous proof of Corollary~\ref{cor:local}. Using graded meshes with 
$h_{\rm min}(\Omega) \sim h_{\rm max}(\Omega)^2$, we refer to \cite{AB17} for the global convergence estimate
\begin{align}\label{eq:slushtermgraded}
\norm{u-u_h}_{\widetilde H^s(\Omega)} \lesssim h^{1-\varepsilon}
\end{align}
for arbitrary (small) $\varepsilon>0$, provided the data is sufficiently regular. 
Hence, in the local $H^1$-norm estimate, the slush term is of higher order, and, 
for solutions locally in $H^2$, one obtains convergence $\mathcal{O}(h)$.

However, in the case of the energy norm the best achievable convergence is $\mathcal{O}(h^{2-s})$, for which 
the slush-term needs to be treated more carefully. First, we note that we can actually weaken the 
$H^{s-1/2}(\Omega)$-norm of the slush term even further by employing commutators of higher order in 
the proof of Theorem~\ref{thm:local}(\ref{item:thm:local-i}). Thus, in the duality argument in the proof of Corollary~\ref{cor:local},
the function $w$ is sufficiently regular, such that we can deduce weighted Sobolev regularity (\cite[Eqn.~(3.6)]{BBNOS18}) for the function 
$v$ satisfying $\A v = w$. As in \cite{BBNOS18,BLN20}, this additional regularity gives
\begin{align*}
\norm{v-\mathcal{J}_h v}_{\widetilde H^{s}(\Omega)} \lesssim h^{1-\varepsilon}
\end{align*}
and combining this with \eqref{eq:slushtermgraded} as in the proof of Corollary~\ref{cor:local} again shows that the 
slush term is of higher order.

We also refer to the numerical example on graded meshes in the next section, which confirms the theoretical convergence rates.
\eremk
\end{remark}

\section{Numerical examples}
\label{sec:numerics}

We illustrate our theoretical results of the previous sections with a numerical example in two 
dimensions. 
On the unit ball $\Omega = B_1(0)$ with constant right-hand side 
$f = 2^{2s}\Gamma(1+s)^2$ the exact solution is, 
see, e.g., \cite{BBNOS18}, 
\begin{align*}
 u(x) = (1-\abs{x}^2)_+^s \qquad \text{where} \;\; g_+ = \max\{g,0\}, 
\end{align*}

We choose the subdomain $\Omega_0 \subset B_1(0)$ as a square centered at the origin with sidelength of $0.4$
as depicted in Figure~\ref{fig:mesh}. The exact solution satisfies $u \in \widetilde H^{1/2+s-\varepsilon}(\Omega)$ for any $\varepsilon >0$ as well as $u \in H^2(\Omega_1)$ on every set $\Omega_0 \subset \Omega_1 \subset \Omega$ satisfying 
$\dist(\Omega_1,\partial \Omega) > 0$.

Figures~\ref{fig:error1}--\ref{fig:error3} show global and local errors in the $L^2$-norm and the $H^1$-seminorm.
The discrete solutions are obtained from the MATLAB code \cite{ABB17}, and the errors are computed elementwise using high precision Gaussian quadrature.

\begin{figure}[ht]
\begin{minipage}{.49\linewidth}
\begin{center}
\includegraphics[width=0.99\textwidth]{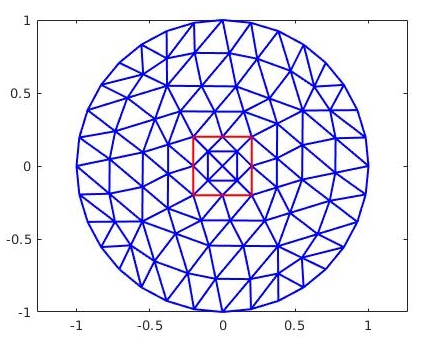}
\vspace{1mm}
\caption{Computational domain, local error computed on red subdomain.}
 \label{fig:mesh}
\end{center}
\end{minipage}
\begin{minipage}{.49\linewidth}
\begin{center}
\includegraphics{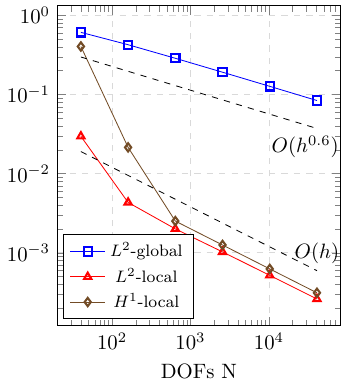}
\end{center}
\caption{Local and global errors in the $L^2$- and $H^1$-norm for $s=0.1$.}
 \label{fig:error1}
\end{minipage}
\end{figure}

As predicted by the theory of \cite{BC19} (for the global $H^1$-error) and Corollary~\ref{cor:local} (for the local $H^1$-error), 
we obtain -- dropping the $\varepsilon > 0$, which we may expect to be arbitrary small in view of the 
shift theorem for smooth $\Omega$ (cf.\ Remark~\ref{remk:shift-theorem})
 --  convergence 
$\mathcal{O}(N^{1/4-s/2}) = \mathcal{O}(h^{s-1/2})$ in the global $H^1$-norm (provided the solutions are in $H^1$), 
$\mathcal{O}(N^{-\min\{1/4+s/2,1/2\}}) = \mathcal{O}(h^{\min\{1/2+s,1\}})$ in the global $L^2$-norm as well as 
$\mathcal{O}(N^{-1/2}) = \mathcal{O}(h)$ both in the local $L^2$-norm 
(see Remark~\ref{rem:sharpness} for the explanation of the limited local convergence rate) and $H^1$-norm. 

\begin{figure}[ht]
\begin{minipage}{.49\linewidth}
\begin{center}
\includegraphics{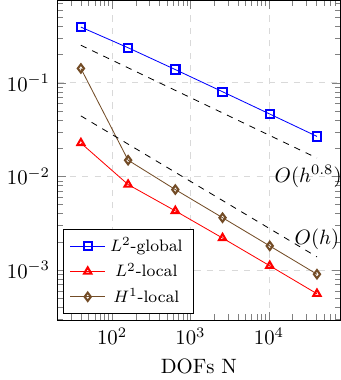}
\end{center}
\end{minipage}
\begin{minipage}{.49\linewidth}
\begin{center}
\includegraphics{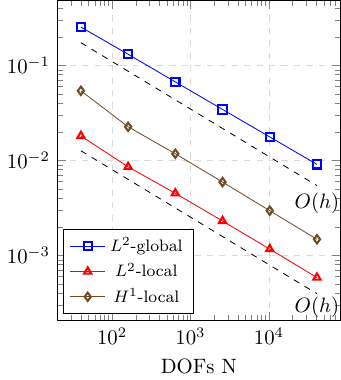}
\end{center}
\end{minipage}
\caption{Local and global errors in the $L^2$- and $H^1$-norm, 
 left: $s=0.3$; 
 right: $s=0.5$.}
 \label{fig:error2}
\end{figure}

\begin{figure}[ht]
\begin{minipage}{.49\linewidth}
\begin{center}
\includegraphics{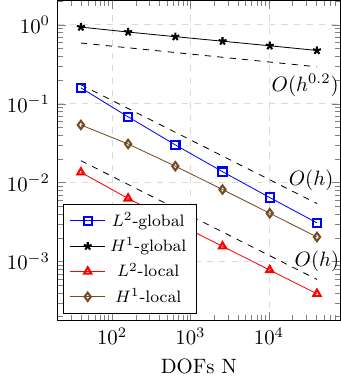}
\end{center}
\end{minipage}
\begin{minipage}{.49\linewidth}
\begin{center}
\includegraphics{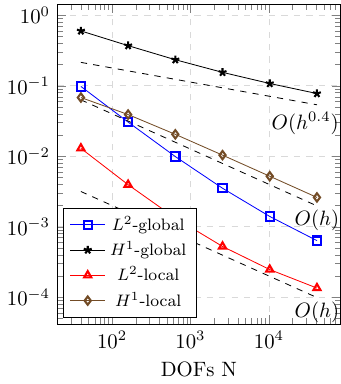}
\end{center}
\end{minipage}
\caption{Local and global errors in the $L^2$- and $H^1$-norm, 
 left: $s=0.7$; 
 right: $s=0.9$.}
 \label{fig:error3}
\end{figure}

Finally, Figure~\ref{fig:error4} computes local and global errors on radially graded meshes, i.e., for a 
fixed mesh size parameter $h>0$, the mesh is graded in such a way that elements
$T \in \mathcal{T}_h$ satisfy $\diam(T) \simeq h^2$ if $\overline{T} \cap \partial \Omega \neq \emptyset$ and 
$\diam(T) \simeq h \dist(T,\partial \Omega)^{1/2}$ otherwise. 
We plot the global error in the energy norm as well as the local errors in $L^2$ and $H^1$ and the upper bound 
$\norm{\cdot}^{1-s}_{L^2}\norm{\cdot}^{s}_{H^1}$ for the local error in the energy norm. 

As expected, see, e.g., \cite{AB17}, one obtains -- up to arbitrary small $\varepsilon>0$ -- convergence 
$\mathcal{O}(h)$ in the global energy norm. For the local errors, the $H^1$-error converges as 
$\mathcal{O}(h)$ and the $L^2$-error is $\mathcal{O}(h^2)$, which produces convergence $\mathcal{O}(h^{2-s})$ 
for the local error in the energy norm, since now the local approximation error is the dominant part.

\begin{figure}[ht]
\begin{minipage}{.49\linewidth}
\begin{center}
\includegraphics{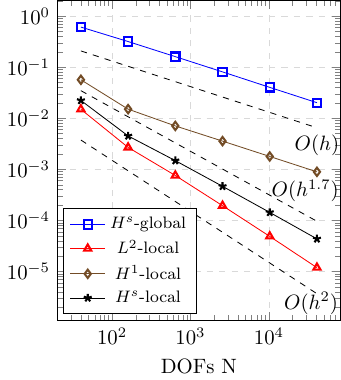}
\end{center}
\end{minipage}
\begin{minipage}{.49\linewidth}
\begin{center}
\includegraphics{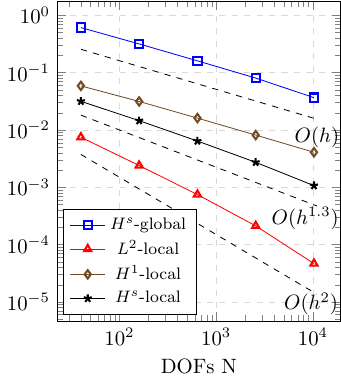}
\end{center}
\end{minipage}
\caption{Local and global errors on graded meshes, 
 left: $s=0.3$; 
 right: $s=0.7$.}
 \label{fig:error4}
\end{figure}

\bibliography{bibliography}{}
\bibliographystyle{amsalpha}
\end{document}